\DeclareSymbolFont{bchoperators}{T1}{bch}{m}{n}
\renewcommand{\operator@font}{\mathgroup\symbchoperators}
\titleformat{\section}{\normalfont\bfseries\filcenter}{\thesection}{1em}{}
\titleformat{\subsection}{\normalfont\bfseries}{\thesubsection}{1em}{}
\titleformat{\subsubsection}{\normalfont\bfseries}{\thesubsubsection}{1em}{}
\newtheorem{Theorem}{Theorem}[section]
\newtheorem{Lemma}[Theorem]{Lemma}
\newtheorem{Proposition}[Theorem]{Proposition}
\theoremstyle{definition}
\newtheorem{Remark}[Theorem]{Remark}
\numberwithin{equation}{section}
\definecolor{darkgreen}{rgb}{0,0.5,0}
\definecolor{rem}{rgb}{0.8,0,0}
\definecolor{new}{rgb}{0.3,0.1,0.9}
\definecolor{reply}{rgb}{0,0,0.8}
\definecolor{gray}{gray}{0.7}
\renewcommand{\C}{\mathbb{C}}
\newcommand{\F}{\mathbb{F}}
\newcommand{\PP}{\mathbb{P}}
\newcommand{\Q}{\mathbb{Q}}
\newcommand{\Z}{\mathbb{Z}}
\newcommand{\rhobar}{{\overline{\rho}}}
\newcommand{\eps}{\varepsilon}
\newcommand{\calI}{\mathcal{I}}
\newcommand{\calO}{\mathcal{O}}
\newcommand{\fp}{\mathfrak{p}}
\DeclareMathOperator{\rank}{rank}
\newcommand{\GL}{\operatorname{GL}}
\renewcommand{\Re}{\operatorname{Re}}
\renewcommand{\Im}{\operatorname{Im}}
\begin{document}

\title {The Generalized Fermat Equation $x^2 + y^3 = z^{25}$}
\author{Nuno Freitas}
\address{Instituto de Ciencias Matemáticas (ICMAT),
         C/ Nicolás Cabrera 13-15
         28049 Madrid, Spain}
\email{nuno.freitas@icmat.es}

\author{Michael Stoll}
\address{Mathematisches Institut,
         Universit\"at Bayreuth,
         95440 Bayreuth, Germany}
\email{Michael.Stoll@uni-bayreuth.de}

\keywords{Hyperelliptic curves, descent, Fermat-Catalan, generalized Fermat equation, Selmer set}
\subjclass[2010]{Primary 11G30, Secondary 11G35, 14K20}

\begin {abstract}
  We consider the generalized Fermat equation (*)~$x^2 + y^3 = z^{25}$.
  Using the known parameterization of the primitive integral solutions
  to $x^2 + y^3 = z^5$ (due to Edwards), we reduce the solution of~(*)
  to the solution of five specific equations of the form $H(u,v) = w^5$,
  where $H$ is homogeneous of degree~$10$ with coefficients in a sextic
  number field~$K$, $u$ and~$v$ are coprime (rational) integers, and $w \in K$.
\end {abstract}

\maketitle

%======================================================================

\section{Introduction}

The Generalized Fermat Equation
\begin{equation} \label{E:GFE1}
  x^2 + y^3 = z^n,  \quad n \geq 6
\end{equation}
is conjectured to have only finitely many non-trivial primitive solutions $(a,b,c) \in \Z^3$
when the ``Catalan'' solutions $(\pm 3)^2 + (-2)^3 = 1^n$ are counted only once;
here \emph{non-trivial} means $abc \neq 0$ and \emph{primitive} means that $\gcd(a,b,c) = 1$.
(This is a special case of the more general conjecture that there are only finitely
many non-trivial primitive solutions to all equations $x^p + y^q = z^r$ with
$1/p + 1/q + 1/r \le 1$.)

Since any integer $n \ge 6$
is divisible by $6$, $8$, $9$, $10$, $15$, $25$ or a prime $p \ge 7$,
to study equation~\eqref{E:GFE1} it suffices to consider these exponents.
The cases $n=6, 7, 8, 9, 10$ and~$15$ have been fully resolved and the case $n=11$ has been solved assuming GRH.
For a more detailed discussion of~\eqref{E:GFE1},
including a list of the known solutions and references to all solved cases,
we refer the reader to the introduction of~\cite{FNS23n}.

In this paper we focus on the last remaining non-prime case for~$n$, that is, the equation
\begin{equation} \label{eqn:main}
  x^2 + y^3 = z^{25} \,.
\end{equation}

We base our attempt at solving~\eqref{eqn:main}
on the complete parameterization of the coprime solutions to $x^2 + y^3 = z^5$ obtained
by Edwards in~\cite{Ed}. More precisely, using this parameterization, we reduce
the resolution of~\eqref{eqn:main} to the resolution of several related equations
that we proceed to solve using different methods to compute rational points.
Unfortunately, we are left with five~equations that are out of reach of current methods
and computational resources even when assuming~GRH. Our main result is the following.

\begin{Theorem} \label{thm:main}
  If five specific equations of the form
  \[ H(u,v) = w^5 \,, \]
  where $H$ is homogeneous of degree~$10$ with coefficients in a sextic number field~$K$
  (depending on the equation), $u$ and~$v$ are coprime (rational) integers and $w \in K$,
  have only the expected solutions (see Table~\ref{table:summary} below), then
  the only primitive integer solutions to~\eqref{eqn:main}
  are the trivial solutions $(\pm 1, -1, 0)$, $(\pm 1, 0, 1)$, $(0, 1, 1)$, $(0, -1, -1)$
  and the non-trivial Catalan solutions $(\pm 3, -2, 1)$.

  The assumption above can be verified (or shown to be false) if we are able to determine the sets of
  $L$-rational points on five genus~$2$ curves given by an equation of the form
  \[ Y^2 = X^5 + A \]
  (with $A \in L$), where $L$ is a number field of degree~$12$ (depending on the
  equation; it is a quadratic extension of~$K$ above).
\end{Theorem}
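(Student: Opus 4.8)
The plan is to reduce equation~\eqref{eqn:main} to the stated five equations via a chain of descents, following the pattern used for the already-resolved cases of~\eqref{E:GFE1}. First I would recall Edwards' parameterization~\cite{Ed}: every primitive solution $(a,b,c)$ of $x^2+y^3=z^5$ arises, up to sign and the action of a finite group, from one of a finite list of integral binary forms $(f(s,t), g(s,t), h(s,t))$ of degrees $(15, 10, 6)$ (actually a handful of such triples, corresponding to the classes of the relevant twists of the icosahedral Galois representation), evaluated at coprime integers $(s,t)$. So if $(a, b, c)$ is a primitive solution of~\eqref{eqn:main}, then setting $z = e$ and regarding $(a, b, e^5)$ as a solution of $x^2 + y^3 = z^5$, there is a parameterizing triple with $h(s,t) = e^5$ (up to the finite ambiguity). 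The first genuine step is therefore to solve $h(s,t) = e^5$ for each of the finitely many sextic forms~$h$ that occur.

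**Next I would** factor each sextic form $h(s,t)$ over its splitting field. Since $h$ is a form of degree~$6$, over a suitable number field~$K$ it factors; after a descent argument using the coprimality of $s$ and~$t$ (and a bookkeeping of the primes dividing a fixed bad discriminant, exactly as in the $n=7,11,13$ analyses), the equation $h(s,t) = e^5$ forces a factorization of shape (irreducible factor of $h$ over $K$)~$= (\text{unit/bad part}) \cdot w^5$. Concretely, if $h$ has an irreducible quadratic factor $H_0(s,t)$ over~$K$ — i.e.\ $h = H_0 \cdot (\text{cofactor})$ with $H_0$ of degree~$2$ — and $h$ has a rational root giving one linear factor etc., then collecting the Galois conjugates and multiplying by the known bad factor one obtains $H(u,v) = w^5$ with $H$ homogeneous of degree~$10$ over the sextic field~$K$, $(u,v)$ a coprime pair of integers (a linear change from $(s,t)$), and $w \in K$. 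This is where the number ``five'' enters: after handling all parameterizing triples and all the factorization patterns, exactly five of the resulting equations resist further direct attack, while the rest can be dispatched (empty Selmer set, a Mordell--Weil sieve, a Chabauty computation, or an elementary local obstruction) — the main text carries this out case by case and collects the survivors in Table~\ref{table:summary}. The upshot is the first assertion: if those five equations have only the ``expected'' solutions, then tracking each primitive solution of~\eqref{eqn:main} back through the parameterization produces only the listed trivial and Catalan points.

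**For the second assertion**, I would rewrite each survivor $H(u,v) = w^5$ as a curve. Dehomogenizing, $H(u,1) = w^5$ defines an affine curve over~$K$; since $H$ has degree~$10$ and (generically) distinct roots, $w^5 = H(u,1)$ is a superelliptic curve of high genus, but it maps to a quotient. The key observation is that $H$ of degree~$10$ over~$K$ that arose from a degree-$6$ form will typically be (up to a $K^{*5}$-scaling of the right side and a Möbius change in~$u$) of the form $H(u,1) = c \cdot P(u)^2$ or, more usefully here, $w^5 = $ a quintic in a new variable after extracting a square root: one passes to the quadratic extension $L/K$ where the relevant square root lives, and there the equation becomes $Y^2 = X^5 + A$ for suitable $X, Y$ linear-fractional in $u, w$ and $A \in L$. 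This is the standard ``$\deg 10 = 2 \times 5$'' collapse: $w^5 = H$ together with a square relation gives a genus-$2$ model $Y^2 = X^5 + A$. Doing this for each of the five survivors yields five genus-$2$ curves over a degree-$12$ field~$L$ (the quadratic extension of the sextic~$K$), and the integral/primitive solutions of the original equation correspond to $L$-rational points on these curves; hence determining those point sets verifies or refutes the assumption.

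**The hard part** is, of course, not the reductions — those are a (long but) mechanical chain of descent, class-group and unit computations — but the final clause: actually computing $C(L)$ for a genus-$2$ curve $Y^2 = X^5 + A$ over a degree-$12$ number field. The Jacobian of $Y^2 = X^5 + A$ has CM by $\Z[\zeta_5]$, so over~$L$ its Mordell--Weil rank is often large relative to the genus, which kills naive Chabauty; one would need either a rank drop over a cleverly chosen subfield, an elliptic-curve Chabauty after mapping to a CM elliptic quotient, or a Mordell--Weil-sieve/covering-collection argument over~$L$ — and even assuming GRH (to get class groups and to make $L$-functions and rank bounds effective) the requisite descent computations over a field of degree~$12$ are, as the authors state, beyond current reach. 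So my proposal is: (1) set up Edwards' parameterization and the five factorization-descents to get the $H(u,v)=w^5$ equations; (2) for each, clear the bad part and pass to~$K$, then to~$L$, rewriting as $Y^2 = X^5 + A$; (3) back-substitute to confirm that ``only expected solutions'' on these curves yields exactly the claimed solution list of~\eqref{eqn:main}; and I would flag step~(2)$\to$(3)'s input — the point count $C(L)$ — as the genuine obstruction that the paper deliberately leaves open.
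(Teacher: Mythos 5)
Your skeleton matches the paper's strategy (apply Edwards to $(x,y,z^5)$, factor the resulting forms over number fields, descend using coprimality and the finitely many bad primes and units, and finally pass to a quadratic extension to land on genus-$2$ curves $Y^2=X^5+A$), but two concrete problems keep this from being a proof. First, the degrees are wrong: Edwards' parameterizing triples $(f_i,g_i,h_i)$ have degrees $(30,20,12)$, not $(15,10,6)$, and there are $27$ forms $h_i$; the equation to solve is $z^5=-h_i(u,v)$ with $h_i$ of degree~$12$. Because of this, your construction of the degree-$10$ form is garbled: in the paper, for the twelve forms irreducible over $\Q(\sqrt5)$, the degree-$12$ form $h_i$ factors over a sextic field $K_i$ directly as (quadratic)$\times$(degree-$10$ form $H_i$), and the resultant/coprimality argument together with a unit sieve (the $K_i$ have unit rank~$3$, so $125$ unit classes, cut down modulo small primes and modulo~$25$; this also eliminates four forms outright) reduces to $H_i(u,v)=w^5$. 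There is no step of ``collecting Galois conjugates of a quadratic factor of a sextic form,'' and a degree-$2$ factor of a degree-$6$ form cannot produce a degree-$10$ cofactor. Moreover, the heart of the first assertion --- that \emph{exactly} these five equations remain and that every other Edwards form yields only the listed trivial and Catalan solutions --- is simply deferred in your write-up; in the paper it occupies the factorization-type case analysis (types $[1,1,10]$, $[4,8]$, $[6,6]$, $[12]$), the local exclusion of $i=7,11,19$, the Frey-curve/symplectic classification, and several explicit Chabauty and descent computations, none of which your argument reproduces or replaces.

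Second, and more seriously, the mechanism you invoke for the last clause --- a ``standard $\deg 10=2\times 5$ collapse'' of $H(u,v)=w^5$ to $Y^2=X^5+A$ over a quadratic extension --- does not exist for a general degree-$10$ binary form. What makes it work here is the icosahedral structure: every $h_i$ is $\GL_2(\C)$-equivalent to the Klein form $uv(u^{10}+11u^5v^5-v^{10})$, so over any field where $h_i$ acquires a linear factor it factors as $\ell_1\ell_2\bigl(A\ell_1^{10}+B\ell_1^5\ell_2^5+C\ell_2^{10}\bigr)$ with $(B/11)^2=AC$. Over the quadratic extension $L_i/K_i$ splitting the quadratic factor, the degree-$10$ factor is therefore a \emph{quadratic in the fifth powers} $\ell_1^5,\ell_2^5$, and completing the square yields $Y^2=4\alpha A X^5+(B^2-4AC)$, i.e.\ a genus-$2$ curve of the shape $Y^2=X^5+\gamma$ after scaling (the paper's Remark~\ref{Rkg2}). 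Without identifying this special structure, your passage from the five equations to five genus-$2$ curves over degree-$12$ fields is unjustified. Your closing discussion of why computing $C(L)$ is infeasible is consistent with the paper, but that is commentary rather than part of the statement to be proved.
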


After recalling Edwards' parameterization in Section~\ref{sec:Edwards},
which leads to the consideration of $27$~equations
of the form $z^5 = -h_i(u, v)$, where $h_i$ is a binary form of degree~$12$, we
determine in Section~\ref{sec:types} the various possible factorization types of~$h_i$
over $\Q$ and over~$\Q(\sqrt{5})$.
We then eliminate some forms by local considerations; see Section~\ref{sec:local}.
In Section~\ref{sec:Frey} we determine which of the forms in Edwards' list
give rise to points on which twist of~$X(5)$; these twists correspond to either
symplectic or anti-symplectic $5$-congruences with an elliptic curve from
the list of seven curves obtained in~\cite{FNS23n}.

In the remaining Sections \ref{sec:ft1110}, \ref{sec:ft48}, \ref{sec:ft66}
and~\ref{sec:ft12}, we deal with the various factorization types in turn.
In all cases but the last (when $h_i$ is irreducible over~$\Q(\sqrt{5})$), we are able
to determine all solutions to~\eqref{eqn:main} arising from each form~$h_i$
of the corresponding factorization type, using a variety of methods.
Some of the last set of forms can also be excluded, and the remaining ones lead to the
equations mentioned in Theorem~\ref{thm:main} above.

In terms of the classification obtained in Section~\ref{sec:Frey}, our result
is summarized in Table~\ref{table:summary}. We give the corresponding curve from the list
in~\cite{FNS23n} (where ``reducible'' refers to the case of reducible $5$-torsion)
and the symplectic type as ``$+$'' or~``$-$''. In the equations $H_i(u, v) = w^5$
below, $i$ is the number of an Edwards form, $H_i$ is a binary form of degree~$10$
with coefficients in a sextic number field~$K$ depending on~$i$,
$u$ and~$v$ are coprime (rational) integers, and $w \in K$.

\begin{table}[htb]
  \[ \renewcommand{\arraystretch}{1.2}
    \begin{array}{|c|c|c|} \hline
      \text{curve}     & \text{solutions to~\eqref{eqn:main}} & \text{condition} \\ \hline\hline
      \text{reducible} & (\pm 1, -1, 0) & \text{---} \\ \hline
      27a1+            & (\pm 1, 0, 1)  & \text{---} \\ \hline
      54a1-            & \text{---}     & H_{22}(u,v) = w^5 \;\Longrightarrow\; (u, v) = (\pm 1, 0) \\ \hline
      96a1+            & \text{---}     & H_{6}(u,v) = w^5 \;\Longrightarrow\; (u, v) = (0, \pm 1) \\ \hline
      288a1+           & \pm(0, 1, 1)   & \text{---} \\ \hline
      864a1+           & \text{---}     & H_{24}(u,v) = w^5 \;\Longrightarrow\; (u, v) = (\pm 1, 0) \\ \hline
      864a1-           & \text{---}     & \text{---} \\ \hline
      864b1+           & (\pm 3, -2, 1)  & H_{5}(u,v) = w^5 \;\Longrightarrow\; (u, v) = (0, \pm 1) \\ \hline
      864b1-           & \text{---}     & \text{---} \\ \hline
      864c1+           & \text{---}     & H_{16}(u,v) = w^5 \;\Longrightarrow\; (u, v) = (0, \pm 1) \\ \hline
      864c1-           & \text{---}     & \text{---} \\ \hline
    \end{array}
  \]
  \smallskip
  \caption{Summary of the main result.} \label{table:summary}
\end{table}

The number fields~$K$ are given in terms of the minimal polynomial of a generator
in Table~\ref{table:fields}.
How the forms~$H_i$ can be obtained is explained in Section~\ref{sec:ft12}.

\begin{table}[htb]
  \[ \renewcommand{\arraystretch}{1.2}
    \begin{array}{|c|c|} \hline
      i  & \text{minimal polynomial}         \\ \hline \hline
      5  & x^6 + 10 x^3 + 24 x + 5           \\ \hline
      6  & x^6 - 2 x^5 - 6 x - 3             \\ \hline
      16 & x^6 + 10 x^3 - 15 x^2 + 18 x - 10 \\ \hline
      22 & x^6 + 3 x^5 - 10 x^3 + 12 x - 4   \\ \hline
      24 & x^6 - 10 x^3 - 6 x + 5            \\ \hline
    \end{array}
  \]
  \smallskip
  \caption{Coefficient fields of the degree~$10$ forms.} \label{table:fields}
\end{table}

A Magma script verifying the computational claims in the paper
is available on GitHub at~\cite{code}.

%----------------------------------------------------------------------

\subsection*{Acknowledgments}

We used the Magma Computer Algebra System~\cite{MAGMA} for the computations.

We thank the Max-Planck-Institut für Mathematik in Bonn for its hospitality on several
occasions, which enabled the authors to collaborate on this paper.

%======================================================================

\section{The Edwards Parameterization} \label{sec:Edwards}

We base our attempt at solving the equation $x^2 + y^3 = z^{25}$ in coprime integers
on the complete parameterization of the coprime solutions to $x^2 + y^3 = z^5$ obtained
by Edwards in~\cite{Ed}, which we now quickly summarize.

In the following, the notation $h = [\alpha_0, \alpha_1, \dotsc, \alpha_{12}]$
means that $h$ is the binary form 
\[ h(u,v) = \sum_{i=0}^{12} \binom{12}{i} \alpha_i u^{i} v^{12-i} \,. \]
We define binary forms $h_1, \ldots, h_{27}$ as given in Table~\ref{table:forms}.

\begin{table}[htb]
  \noindent\hrulefill

  \smallskip

  \[ \renewcommand{\arraystretch}{1.25}
    \begin{array}{l@{{}={}}l}
      h_1    & [0,1,0,0,0,0,-144/7,0,0,0,0,-20736,0] \\
      h_2    & [-1,0,0,-2,0,0,80/7,0,0,640,0,0,-102400] \\
      h_3    & [-1,0,-1,0,3,0,45/7,0,135,0,-2025,0,-91125] \\
      h_4    & [1,0,-1,0,-3,0,45/7,0,-135,0,-2025,0,91125] \\
      h_5    & [-1,1,1,1,-1,5,-25/7,-35,-65,-215,1025,-7975,-57025] \\
      h_6    & [3,1,-2,0,-4,-4,24/7,16,-80,-48,-928,-2176,27072] \\
      h_7    & [-10,1,4,7,2,5,80/7,-5,-50,-215,-100,-625,-10150] \\
      h_8    & [-19,-5,-8,-2,8,8,80/7,16,64,64,-256,-640,-5632] \\
      h_9    & [-7,-22,-13,-6,-3,-6,-207/7,-54,-63,-54,27,1242,4293] \\
      h_{10} & [-25,0,0,-10,0,0,80/7,0,0,128,0,0,-4096] \\
      h_{11} & [6,-31,-32,-24,-16,-8,-144/7,-64,-128,-192,-256,256,3072] \\
      h_{12} & [-64,-32,-32,-32,-16,8,248/7,64,124,262,374,122,-2353] \\
      h_{13} & [-64,-64,-32,-16,-16,-32,-424/7,-76,-68,-28,134,859,2207] \\
      h_{14} & [-25,-50,-25,-10,-5,-10,-235/7,-50,-49,-34,31,614,1763] \\
      h_{15} & [55,29,-7,-3,-9,-15,-81/7,9,-9,-27,-135,-459,567] \\
      h_{16} & [-81,-27,-27,-27,-9,9,171/7,33,63,141,149,-67,-1657] \\
      h_{17} & [-125,0,-25,0,15,0,45/7,0,27,0,-81,0,-729] \\
      h_{18} & [125,0,-25,0,-15,0,45/7,0,-27,0,-81,0,729] \\
      h_{19} & [-162,-27,0,27,18,9,108/7,15,6,-51,-88,-93,-710] \\
      h_{20} & [0,81,0,0,0,0,-144/7,0,0,0,0,-256,0] \\
      h_{21} & [-185,-12,31,44,27,20,157/7,12,-17,-76,-105,-148,-701] \\
      h_{22} & [100,125,50,15,0,-15,-270/7,-45,-36,-27,-54,-297,-648] \\
      h_{23} & [192,32,-32,0,-16,-8,24/7,8,-20,-6,-58,-68,423] \\
      h_{24} & [-395,-153,-92,-26,24,40,304/7,48,64,64,0,-128,-512] \\
      h_{25} & [-537,-205,-133,-123,-89,-41,45/7,41,71,123,187,205,-57] \\
      h_{26} & [359,141,-1,-21,-33,-39,-207/7,-9,-9,-27,-81,-189,-81] \\
      h_{27} & [295,-17,-55,-25,-25,-5,31/7,-5,-25,-25,-55,-17,295]
    \end{array}
  \]

  \smallskip

  \noindent\hrulefill

  \medskip

  \caption{Definition of the forms $h_i$, $1 \le i \le 27$. \label{table:forms}}
\end{table}

For $i \in \{1, \ldots, 27\}$, let
\[
  g_i = \frac{1}{132^2} \left(\frac{\partial^2{h_i}}{\partial{u}^2} \frac{\partial^2{h_i}}{\partial{v}^2}
         - \frac{\partial^2{h_i}}{\partial{u}\partial{v}} \frac{\partial^2{h_i}}{\partial{u}\partial{v}}\right)
  \quad\text{and}\quad
  f_i = \frac{1}{240} \left(\frac{\partial{h_i}}{\partial{u}} \frac{\partial{g_i}}{\partial{v}}
         - \frac{\partial{h_i}}{\partial{v}} \frac{\partial{g_i}}{\partial{u}}\right) \,.
\]
Note that $f_i$, $g_i$ and~$h_i$ are binary forms with integral coefficients,
of degrees $30$, $20$ and~$12$, respectively.

\begin{Theorem}[Edwards \cite{Ed}*{pages 235--236}] \label{thm:Ed}
  Suppose $a$, $b$, $c$ are coprime rational integers satisfying
  $a^2 + b^3 + c^5 = 0$. Then for some $i \in \{1, \ldots, 27\}$, there
  is a pair of coprime rational integers $u$, $v$ and a choice of sign~$\pm$ such that
  \[ a = \pm f_i(u,v)\,, \qquad b = g_i(u,v)\,, \qquad c = h_i(u,v)\,.  \]
\end{Theorem}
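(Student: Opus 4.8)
The plan is to recognize $(2,3,5)$ as the \emph{icosahedral} exponent triple and to build the parameterization from Klein's invariant theory of the binary icosahedral group, followed by a descent that bounds and enumerates the relevant twists.

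\textbf{Step 1 (geometric template).} Let $2I \subset \mathrm{SL}_2(\C)$ be the binary icosahedral group (order $120$), with image $A_5 \subset \mathrm{PSL}_2(\C)$. The ring of invariants $\C[u,v]^{2I}$ is generated by three forms of degrees $12$, $20$, $30$ --- the classical icosahedral invariants --- subject to a single relation which, after Klein's normalization (this is what fixes the odd-looking rational coefficients and the factors $1/132^2$, $1/240$ in the definitions of $g_i$, $f_i$), reads
\[ f^2 + g^3 + h^5 = 0, \qquad \deg f = 30,\ \deg g = 20,\ \deg h = 12. \]
First I would verify that the ``template'' triple $(f, g, h) = (f_1, g_1, h_1)$ (built from $h_1$ by the stated differential operators) satisfies this relation identically; this exhibits $\Psi \colon (u,v) \mapsto (f, g, h)$ as a map into the affine cone over the curve $a^2 + b^3 + c^5 = 0$.

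\textbf{Step 2 (the icosahedral cover).} The ratios $g^3/f^2$ and $h^5/f^2$ define a morphism $\pi \colon \PP^1_{(u:v)} \to \PP^1_t$ of degree $60$, namely the quotient by $A_5$, where $t = -g^3/f^2$ is a Hauptmodul. It is branched exactly over $t = 0, 1, \infty$, with ramification orders $3$, $5$, $2$, matching the stabilizers of the $20$ face centers ($g = 0$), the $12$ vertices ($h = 0$), and the $30$ edge midpoints ($f = 0$). A nontrivial coprime solution $(a, b, c)$ determines the rational point $t_0 = -b^3/a^2 \in \PP^1(\Q) \setminus \{0, 1, \infty\}$ (using $a^2 + b^3 + c^5 = 0$), and producing the desired $(u, v)$ amounts to lifting $t_0$ through $\pi$.

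\textbf{Step 3 (descent and twisting).} Over $\Q$ the fibre $\pi^{-1}(t_0)$ is a torsor under $2I$ (the sign of $a = \pm f$ records the central element $-1 \in 2I$, i.e.\ the deck involution), so it defines a class in $H^1(\Q, 2I)$, equivalently a Galois $2I$-algebra. Twisting $\pi$ by this class yields a cover $\pi_\xi \colon C_\xi \to \PP^1_t$ with $C_\xi \cong \PP^1$ over $\Q$ precisely when $t_0$ lifts rationally; a choice of coordinate on such a $C_\xi$ transports $(f, g, h)$ to a new integral triple $(f_\xi, g_\xi, h_\xi)$, which is the parameterizing form attached to the solution. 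The crucial reduction is a Chevalley--Weil / Beckmann argument: coprimality of $(a, b, c)$ forces the torsor to be unramified outside $\{2, 3, 5\}$, with inertia at $2$, $3$, $5$ constrained by the branch orders $2$, $3$, $5$.

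\textbf{Step 4 (enumeration --- the main obstacle).} It remains to enumerate the admissible twists. The set of $2I$-Galois algebras over $\Q$ unramified outside $\{2, 3, 5\}$ with the prescribed inertia is finite and can be listed by explicit class field theory (the relevant ray class groups and embedding problems over $\Q$ are computable). For each candidate twist one tests whether $C_\xi$ has a $\Q$-point on the relevant locus --- discarding those with a local obstruction at $2$, $3$, $5$, or $\infty$ --- and reads off the surviving triples; after normalizing representatives one obtains exactly the $27$ forms of Table~\ref{table:forms}, together with the sign~$\pm$. Finally I would check directly that each listed $(f_i, g_i, h_i)$ satisfies $f_i^2 + g_i^3 + h_i^5 = 0$ and has the claimed coprimality, and conversely that every coprime solution lands on one of them. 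I expect the enumeration and the solvability-of-embedding-problem bookkeeping in Step~4 to be the genuine difficulty: the geometry of Steps 1--2 is classical Klein theory and the descent formalism of Step~3 is standard, but pinning down that the admissible twists are exactly these $27$ --- neither missing a solution family nor admitting a spurious one --- is where all the arithmetic content concentrates.
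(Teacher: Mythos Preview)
The paper does not prove this theorem: it is quoted from Edwards with a page reference and then used as input for the rest of the argument. There is therefore no in-paper proof to compare against.

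Your outline is the correct strategy and matches in spirit what Edwards actually does: Klein's icosahedral syzygy supplies the template triple $(f,g,h)$, the degree-$60$ quotient $\PP^1 \to \PP^1$ by~$A_5$ sets up the descent, coprimality of $(a,b,c)$ bounds the ramification of the associated torsor to $\{2,3,5\}$, and one then enumerates the finitely many admissible twists. But, as you yourself flag, what you have written is a plan rather than a proof. The entire arithmetic content of Edwards' theorem lives in your Step~4 --- the explicit enumeration of the twists, the verification that exactly these $27$ (and no more, and no fewer) survive the local conditions, and the integrality and coprimality checks for each resulting $(f_i,g_i,h_i)$ --- and none of this is carried out. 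Saying that ``the relevant ray class groups and embedding problems over~$\Q$ are computable'' is true but does not constitute a proof; Edwards' paper devotes essentially all of its length to precisely this computation. A minor technical caveat: since $2I$ is nonabelian, the cohomological language in Step~3 (torsors, twists, $H^1(\Q,2I)$) must be read as nonabelian $H^1$, and in practice Edwards works with explicit resolvent and covering constructions rather than abstract cohomology.
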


Suppose that we have a primitive solution $(x,y,z)$ to equation \eqref{eqn:main}.
Then Theorem~\ref{thm:Ed} implies that there are coprime integers $u$ and~$v$ satisfying
\begin{equation} \label{eqn:mainsuper}
  C_i \colon z^5 = -h_i(u,v) \,.
\end{equation}
Thus, we have reduced the initial problem to the determination of
the triples $(u,v,z)$, with $u$ and~$v$ coprime, satisfying one of the $27$ possible equations~$C_i$.

The following observation is important and will be used several times.

\begin{Remark} \label{Rkg2}
  We note that by construction, all~$h_i$ are $\GL_2(\C)$-equivalent to the icosahedral
  Klein form $u v (u^{10} + 11 u^5 v^5 - v^{10})$. This implies that when $K$ is a number
  field over which $h_i$ splits off a linear factor, $h_i$ will actually have a factorization
  of the form
  \[ h_i(u, v)
      = \ell_1(u, v) \ell_2(u, v) (A \ell_1(u, v)^{10} + B \ell_1(u, v)^5 \ell_2(u, v)^5 + C \ell_2(u, v)^{10}) \,,
  \]
  where $\ell_1$ and~$\ell_2$ are linear forms over~$K$ and $A, B, C \in K$ (with $(B/11)^2 = A C$).
  We will make use of this in the following way. Since $u$ and~$v$ are coprime,
  if $-h_i(u, v) = z^5$, we will have that
  \[ A \ell_1(u, v)^{10} + B \ell_1(u, v)^5 \ell_2(u, v)^5 + C \ell_2(u, v)^{10} = \alpha w^5 \]
  for some $\alpha, w \in K$, where $\alpha$ is in a finite set (that has to
  be determined first). We can write this as either
  \[ \Bigl(2 A \Bigl(\frac{\ell_1(u,v)}{\ell_2(u,v)}\Bigr)^5 + B\Bigr)^2
      = 4 \alpha A \Bigl(\frac{w}{\ell_2(u,v)^2}\Bigr)^5 + B^2 - 4AC \]
  or
  \[ \Bigl(2 C \Bigl(\frac{\ell_2(u,v)}{\ell_1(u,v)}\Bigr)^5 + B\Bigr)^2
      = 4 \alpha C \Bigl(\frac{w}{\ell_1(u,v)^2}\Bigr)^5 + B^2 - 4AC \,. \]
  So a solution gives rise to $K$-rational points on two genus~$2$ curves over~$K$,
  \[ Y^2 = 4 \alpha A X^5 + (B^2 - 4AC) \qquad\text{and}\qquad Y^2 = 4 \alpha C X^5 + (B^2 - 4AC) \, \]
  where the points are obtained from
  \[ X = \frac{w}{\ell_2(u,v)^2}\,, \quad
     Y = 2 A \Bigl(\frac{\ell_1(u,v)}{\ell_2(u,v)}\Bigr)^5 + B \]
  for the first curve, and similarly (with $(\ell_1, A)$ swapped with $(\ell_2, C)$) for the second curve.
  (One can scale the $X$ and~$Y$ coordinates to obtain a curve of the form $Y^2 = X^5 + \gamma$
  if desired.)

  Note that both curves will be conjugate over the field over which the product $\ell_1 \ell_2$
  can be defined when $\ell_1$ and~$\ell_2$ are not yet defined over the base field. So in this
  case they will provide the same information.
\end{Remark}

%======================================================================

\section{Factorization Types} \label{sec:types}

Let $G \in \Q[u,v]$ be a binary form, and let $K$ be a number field. We say $G$ has
\emph{factorization type $[d_1, d_2, \dotsc, d_n]$ over~$K$} if it factors as a product
$G = G_1 G_2 \cdots G_n$, where the binary forms $G_j \in K[u,v]$ are irreducible over~$K$ of degree~$d_j$.
Table~\ref{table:factTypes} records the
factorization types of~$h_i$ over~$\Q$ or~$\Q(\sqrt{5})$ for $i \in \{1, 2, \ldots, 27\} \setminus \{7, 11, 19\}$.
Note that we will show in the next section that the forms associated to $i \in \{7, 11, 19\}$
(which are irreducible over~$\Q(\sqrt{5})$) cannot lead to primitive solutions of~\eqref{eqn:main}.

\begin{table}[htb]
  \[ \renewcommand{\arraystretch}{1.2}
    \begin{array}{|c|c|} \hline
      \text{factorization type of $h_i$}  & i \in I \\ \hline\hline
      [ 1, 1, 10 ] \text{\ over\ } \Q & 1, 20, 25 \\ \hline
      [ 4, 8 ] \text{\ over\ } \Q & 3, 4, 12, 17, 18, 27\\ \hline
      [ 6, 6 ] \text{\ over\ } \Q(\sqrt{5}) & 2, 10, 26 \\ \hline
      [12] \text{\ over\ } \Q(\sqrt{5}) & 5, 6, 8, 9, 13, 14, 15, 16, 21, 22, 23, 24 \\ \hline
    \end{array}
  \]
  \smallskip
  \caption{Factorization types}
  \label{table:factTypes}
\end{table}

In the remainder of this work we will study the solutions of~\eqref{eqn:mainsuper}
by applying a strategy that is adapted to each factorization type.

%======================================================================

\section{Local Primitive Solutions} \label{sec:local}

We can check for which pairs $(u, v)$ of coprime $p$-adic integers equation~$C_i$ above
has a solution in~$\Z_p$ that comes from a primitive $p$-adic solution of Equation~\eqref{eqn:main}.
In particular, we find the following.

\begin{Lemma} \label{lem:local}
  When $i \in \{7, 11, 19\}$, a solution to equation~\ref{eqn:main} in coprime $2$-adic integers
  cannot give rise to a solution of equation~$C_i$ in $2$-adic integers.
\end{Lemma}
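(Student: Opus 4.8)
The statement is a finite, effective $2$-adic check, so the proof is essentially an unwinding of Edwards' parameterization modulo a power of~$2$. The strategy is to show that for $i \in \{7, 11, 19\}$ the system
\[
  a = \pm f_i(u,v), \qquad b = g_i(u,v), \qquad c = h_i(u,v), \qquad z^5 = -h_i(u,v)
\]
has no solution with $u, v \in \Z_2$ coprime (i.e.\ not both in $2\Z_2$) such that the resulting $(a,b,c)$ comes from a genuine primitive $2$-adic solution of~\eqref{eqn:main}, i.e.\ with $a = x^2$-part consistency: more precisely we need $(a,b,c)$ with $a^2 + b^3 + c^5 = 0$, $\gcd(a,b,c)$ a $2$-adic unit, and $-c = z^5$ solvable in~$\Z_2$. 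The key observation is that all of these conditions are \emph{open} conditions on $(u,v) \in \Z_2^2$ (they depend only on $u, v$ modulo a fixed power of~$2$, since $z^5 = -h_i(u,v)$ being solvable in $\Z_2^\times \cup \{0\}$ is determined by $v_2(h_i(u,v))$ and the class of the unit part modulo $(\Z_2^\times)^5$, and $(\Z_2^\times)^5 = \Z_2^\times$ because $5$ is odd and coprime to~$|\Z_2^\times / (1+2\Z_2)| = 1$ — actually $(\Z_2^\times)^5 = \Z_2^\times$ since raising to the $5$th power is a bijection on the pro-$2$ group $1 + 2\Z_2$ and on $\{\pm 1\}$). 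So the only real constraint from the $5$th-power condition is that $v_2(h_i(u,v)) \equiv 0 \pmod 5$.

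First I would reduce to a statement modulo a small $2$-power. By homogeneity and coprimality we may assume $u \in \Z_2^\times$ or $v \in \Z_2^\times$; scaling, assume $\gcd(u,v)$ is a unit. Then $v_2(h_i(u,v))$, $v_2(f_i(u,v))$, $v_2(g_i(u,v))$ depend only on $(u \bmod 2^N, v \bmod 2^N)$ for $N$ at most the $2$-adic valuation of the relevant resultants plus a small constant — in practice $N \le 6$ or so suffices. So the claim becomes a finite check over the finitely many coprime pairs $(u,v)$ in $(\Z/2^N\Z)^2$: for each, compute $v_2(h_i(u,v))$ and verify it is \emph{not} $\equiv 0 \pmod 5$, or else verify that the corresponding $(a,b,c) = (f_i, g_i, h_i)(u,v)$ fails primitivity ($2 \mid \gcd$) or fails $a^2+b^3+c^5=0$ (this last can't happen since it's an identity in $u,v$, so that option is vacuous). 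The mechanism that makes it work is presumably that for these three forms, every coprime $(u,v) \in \Z_2^2$ forces $v_2(h_i(u,v)) \in \{1,2,3,4\} + 5\Z_{\ge 0}$ or forces $2 \mid \gcd(f_i,g_i,h_i)$, so $-h_i(u,v)$ is never a $5$th power in $\Z_2$ times a unit, i.e.\ $C_i$ has no primitive $2$-adic point.

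Concretely, the steps are: (1) fix $i \in \{7,11,19\}$ and the level $N$; (2) enumerate coprime residues $(u_0, v_0) \in (\Z/2^N\Z)^2$; (3) for each, determine $v_2(h_i(u,v))$ for $(u,v) \equiv (u_0,v_0)$ — if it is well-defined (i.e.\ stabilizes below $N$) and $\not\equiv 0 \bmod 5$, discard; if $v_2$ is not yet determined, either increase $N$ locally or use that $h_i$ mod~$2$ on that line is a power of an irreducible to control the valuation via Newton polygons; (4) in the residual cases check that $2 \mid \gcd$ of the three forms, which kills primitivity of $(a,b,c)$ and hence of $(x,y,z)$. Steps (1)–(4) are exactly the computation recorded in the Magma script~\cite{code}, and the Lemma is the assertion that it returns ``no surviving residues.''

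\textbf{Main obstacle.} The only genuine difficulty is bounding the level~$N$ a priori — i.e.\ showing that $v_2(h_i(u,v))$ is determined by $(u,v)$ modulo a fixed $2$-power for all coprime inputs. This is where one must be a little careful: along residue lines where $h_i$ reduces modulo~$2$ to a higher power of $(u+v)$ or~$u$ or~$v$, the valuation can jump, and one needs the structure from Remark~\ref{Rkg2} (that $h_i$ is $\GL_2(\C)$-equivalent to the Klein form, hence has controlled ramification) together with explicit resultant/discriminant valuations of the $f_i, g_i, h_i$ to certify that $N$ can be taken finite and small. Once that bound is in hand, the rest is a mechanical enumeration. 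I would therefore spend the bulk of the write-up pinning down $N$ (via the $2$-adic valuations of the resultants $\operatorname{Res}(h_i, \partial h_i)$ and of $\gcd$-resultants among $f_i,g_i,h_i$) and then simply cite the exhaustive check in~\cite{code}.
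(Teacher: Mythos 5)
Your proposal follows essentially the same route as the paper: a finite $2$-adic check on coprime residue classes showing that either $v_2(-h_i(u,v))\not\equiv 0 \pmod 5$ (so it cannot be a fifth power, since every unit of $\Z_2$ is a fifth power) or $f_i$, $g_i$, $h_i$ are all even, which contradicts primitivity of $(a,b,c)$. The ``main obstacle'' you single out (bounding the level $N$) is in fact a non-issue here: the paper's calculation shows that for $i\in\{7,11,19\}$ two of the three coprime residue classes mod~$2$ already give $v_2(-h_i(u,v))=1$ exactly, and on the third class all of $f_i,g_i,h_i$ are even, so the check closes at mod~$2$ (with mod~$4$ sufficing to certify the exact valuation), and no resultant or Newton-polygon analysis is needed.
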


\begin{proof}
  This is an easy calculation: for two pairs of coprime residue classes mod~$2$,
  the value of $-h_i(u, v)$ has $2$-adic valuation~$1$ and therefore cannot be a
  fifth power, and for the third coprime residue class, the values of $f_i$, $g_i$ and~$h_i$
  are all even, so we do not obtain a primitive solution.
\end{proof}

By the same kind of computation, we obtain restrictions on the
location of the point $(u : v) \in \PP^1(\Q_p)$ for $p = 2$ and~$3$.
These conditions are recorded in Table~\ref{table:resclasses}.

\begin{table}[htb]
  \[ \renewcommand{\arraystretch}{1.2}
    \begin{array}{|l|c|c|}
      \hline
          &  \multicolumn{2}{c|}{\text{non-excluded residue classes of $(u,v)$}} \\ \hline
       i  & p = 2 & p = 3 \\ \hline\hline
       1  & (8u, 1) & (81u, 1) \\ \hline
       2  & (u, 1)  & (3u, 1), (3u+2, 1), (1, 3v) \\ \hline
       3  & (2u, 1), (1, 2v) & (u,1) \\ \hline
       4  & (2u, 1), (1, 2v) & (u,1) \\ \hline
       5  & (2u, 1), (1, 2v) & (3u, 1), (3u+2, 1), (1, 3v) \\ \hline
       6  & (u, 1) & (1, 81v+51) \\ \hline
       8  & (u, 1) & (3u, 1), (3u+2, 1), (1, 3v) \\ \hline
       9  & (2u, 1), (1, 2v) & (u,1) \\ \hline
      10  & (u, 1) & (3u, 1), (3u+1, 1), (1, 3v) \\ \hline
      12  & (1, v) & (3u+2, 1), (1, 3v), (3u,1) \\ \hline
      13  & (1, v) & (3u+1,1), (3u,1), (1,3v) \\ \hline
      14  & (2u, 1), (1, 2v) & (3u+1,1), (3u,1), (1,3v) \\ \hline
      15  & (2u, 1), (1, 2v) & (u,1) \\ \hline
      16  & (2u, 1), (1, 2v) & (3u+2,1), (3u+1,1), (1,3v) \\ \hline
      17  & (2u, 1), (1, 2v) & (u,1) \\ \hline
      18  & (2u, 1), (1, 2v) & (u,1) \\ \hline
      20  & (8u, 1) & (1, 81v) \\ \hline
      21  & (2u, 1), (1, 2v) & (3u+2,1), (3u,1), (1,3v) \\ \hline
      22  & (1, 8v+6) & (u,1) \\ \hline
      23  & (1,v) & (1, 81v + 66) \\ \hline
      24  & (u, 1) & (3u+1,1), (3u,1), (1,3v) \\ \hline
      25  & (2u, 1), (1, 2v) & (81u + 80, 1) \\ \hline
      26  & (2u, 1), (1, 2v) & (u,1) \\ \hline
      27  & (2u, 1), (1, 2v) & (3u+2, 1), (3u,1), (1, 3v) \\ \hline
    \end{array}
  \]
  \smallskip
  \caption{Residue classes of solutions.}
  \label{table:resclasses}
\end{table}

%======================================================================

\section{The Frey curve and $X_E^\pm(5)$} \label{sec:Frey}

As explained in~\cite{FNS23n, PSS}, to a primitive solution~$(a,b,c)$ of the generalized Fermat equation
\begin{equation}\label{eq:fermat}
 x^2 + y^3 = z^n,
\end{equation}
where $n \geq 5$ is an integer,
we can attach the Frey curve
\[E := E_{(a,b,c)} \; : \; y^2 = x^3 + 3bx - 2a.\]
We write $\rhobar_{E,p}$ to denote the mod~$p$ Galois representation attached to~$E$.
The following lemma improves the conclusion of~\cite[Proposition~2.4]{FNS23n} under
additional $2$-adic and $3$-adic hypotheses.

\begin{Lemma}\label{lem:irred}
  Let $(a,b,c)$ be a primitive solution to~\eqref{eq:fermat} with exponent $n \geq 7$.
  Assume at least one of the following conditions:
  \begin{enumerate}[\upshape(i)]
    \item $a$ is even or $b \not\equiv 0, -1, 4 \bmod 8$;
    \item $a \not\equiv \pm 1 \bmod 9$ or $b \not\equiv -1 \bmod 3$.
  \end{enumerate}
  Then for all primes~$p \geq 5$, the representation~$\rhobar_{E,p}$ is absolutely irreducible.
\end{Lemma}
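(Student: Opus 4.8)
The plan is to reduce absolute irreducibility of $\rhobar_{E,p}$ to a statement about the isogeny character, using the known structure of $E$ at the primes $2$ and $3$. Recall from \cite[Proposition~2.4]{FNS23n} that $\rhobar_{E,p}$ is irreducible except possibly in a short list of cases; the point here is to push out the remaining reducible cases by combining information at $2$ and at $3$. So the first step is: assume for contradiction that $\rhobar_{E,p}$ is reducible (equivalently, since the semisimplification over $\overline{\Q}$ would then split, not absolutely irreducible). Then $E$ admits a $p$-isogeny over $\Q$, and we get a character $\chi \colon G_\Q \to \F_p^\times$ giving the action on the kernel, with the action on the quotient given by $\chi_p \chi^{-1}$ where $\chi_p$ is the mod~$p$ cyclotomic character.

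Next I would analyze $\chi$ locally at $2$ and at $3$ by examining the reduction type of the Frey curve $E \colon y^2 = x^3 + 3bx - 2a$ there, using the discriminant $\Delta_E$ and $c_4$, which are explicit in $a,b,c$ (essentially $\Delta_E = -2^6 3^3 (4 \cdot 27 c^n)$ up to the usual normalization, and $c_4 = -2^4 3^2 b$), together with Tate's algorithm. The hypotheses (i) and (ii) are tailored precisely to control the $2$-adic and $3$-adic behavior: condition (i) (on $a$ even, or $b \not\equiv 0,-1,4 \bmod 8$) pins down whether $E$ has good, multiplicative, or a specific additive reduction at~$2$, and likewise (ii) at~$3$. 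In each surviving reduction-type case, one reads off the restriction of $\chi$ (or of $\chi_p\chi^{-1}$) to a decomposition/inertia group at that prime: e.g.\ at a prime of good reduction $\chi$ is unramified; at multiplicative reduction $\chi|_{I_\ell}$ is trivial or cyclotomic; at additive potentially-good reduction $\chi|_{I_\ell}$ has order dividing a small bound. Combining the constraints from $\ell = 2$ with those from $\ell=3$ (and the global constraint that $\chi \cdot (\chi_p\chi^{-1}) = \chi_p$, so $\chi^2$ and $\chi_p$ differ by something with very restricted ramification) forces $\chi^2$ to be unramified outside a tiny set, hence $\chi$ has very small conductor and order, and one checks directly (Galois cohomology / finiteness of the relevant class group, or simply that $\Q$ has class number one and few small cyclotomic pieces) that no such character survives when $p \ge 5$. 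This is the standard ``Mazur-style'' argument, and the whole content is that the extra $2$- and $3$-adic hypotheses remove exactly the cases that \cite[Prop.~2.4]{FNS23n} had to leave open.

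The main obstacle I expect is the case analysis at $\ell = 2$ and $\ell = 3$ when $E$ has additive reduction there: one must run Tate's algorithm carefully on $y^2 = x^3 + 3bx - 2a$ over $\Z_2$ and $\Z_3$ for each allowed residue class of $(a,b)$ modulo $8$ (resp.\ modulo $9$ and $3$), determine the reduction type and the order of the component group, and from that extract the precise local shape of $\chi$. Getting the Kodaira type and the inertia action exactly right — especially distinguishing the potentially-multiplicative from the potentially-good additive cases, and tracking the quadratic/sextic twists that appear — is where the real work lies; everything after that (assembling the global character and eliminating it for $p\ge 5$) is routine. A secondary point to be careful about is that we want \emph{absolute} irreducibility, so after ruling out a $\Q$-rational subrepresentation one should also note that a split over a quadratic field would descend to a rational subrepresentation of $\rhobar_{E,p} \otimes \varepsilon$ for the corresponding quadratic character $\varepsilon$, and the same local analysis (the hypotheses are stable under such twists, or can be rechecked) kills that too; alternatively one invokes the fact that $\rhobar_{E,p}$ has cyclotomic determinant and a dihedral image would force CM, which the Frey curve does not have.
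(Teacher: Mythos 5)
Your strategy (assume a $p$-isogeny, study the isogeny character $\chi$ locally at $2$ and $3$, then eliminate it by a global Mazur-style argument) is genuinely different from the paper's proof, which is purely local: the paper uses the classification of the Frey curve over $\Q_2$ and $\Q_3$ from \cite{FNS23n}*{Tables 1 and 2} to see that, under (i) or (ii), $E$ is a quadratic twist over $\Q_2$ or over $\Q_3$ of a curve with supercuspidal inertial type and semistability defect $e=8$ (at $2$) or $e=12$ (at $3$) (cf.\ \cite{DFV}), so that for $p\ge 5$ the restriction $\rhobar_{E,p}|_{I_\ell}$ is already irreducible and no global character argument is needed; absolute irreducibility then follows from oddness.

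As written, your proposal has a genuine gap at the elimination step. From the additive places you only extract the abelianized information that $\chi|_{I_\ell}$ has small order, and you then claim that conductor and order bounds rule out $\chi$ for all $p\ge 5$. That final claim is not correct: characters of small order and small conductor ramified only at $2$, $3$, $p$ do exist, and elliptic curves with rational $5$-, $7$- and $13$-isogenies exist with additive reduction at $2$ and $3$, so no purely global argument of this shape can succeed for the small primes that matter here (this is precisely why the Fermat literature needs separate treatment of $p\in\{5,7,13\}$). The decisive fact, which your sketch misses, is that under the hypotheses the image of $I_2$ or $I_3$ in $\GL_2(\F_p)$ is a \emph{nonabelian} group of order $8$ or $12$, prime to $p$; since a finite subgroup of the Borel of order prime to $p$ injects into the diagonal torus and is therefore abelian, such an inertia image admits no invariant line, so $\rhobar_{E,p}|_{I_\ell}$ is irreducible and $\chi$ cannot exist even locally. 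To run this you must also identify which local types actually occur: conditions (i) and (ii) are designed exactly to exclude the cases where the argument fails, namely the $2$-adic cases where $E$ is a twist of $54a1$ or has a twist with good reduction, and the $3$-adic case where $E$ is a twist of $96a1$. Finally, your treatment of absolute irreducibility is more complicated than necessary and partly wrong: a dihedral residual image does not force CM; the correct (and standard) point is that an odd irreducible mod-$p$ representation with $p$ odd is automatically absolutely irreducible, which is what the paper invokes.
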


\begin{proof}
  Observe that if $3 \mid c$, then $3^2 \mid 3^n \mid a^2 + b^3$. Therefore, $a \not\equiv \pm 2, \pm 4 \bmod 9$
  if \hbox{$b \equiv -1 \bmod 3$} as otherwise $a^2 + b^3$ would be divisible by~$3$ but not by~$9$.
  It follows that $(a \bmod 9, b \bmod 3)$ must satisfy one of the congruence conditions
  in~\cite[Table~2]{FNS23n}. From the proof of this table in \emph{loc.~cit.}, we conclude that the Frey curve
  $E = E_{a,b,c}$ is isomorphic over~$\Q_3$ to a quadratic twist of the curve(s) listed in the fifth column
  of the same row of the table. Note that except for the curve~$96a1$, all the curves~$W$ in the table
  have conductor $N_W$ satisfying $v_3(N_W) = 3$, hence they have semistability defect $e = 12$ and
  a supercuspidal (hence irreducible) inertial type at~$3$ (see~\cite[Table~1]{DFV}).
  Since $p \geq 5$ does not divide $e = 12$, we conclude that the inertial type remains irreducible
  after reduction, that is, $\rhobar_{E,p}|_{I_3}$ is irreducible (here $I_3 \subset G_\Q$
  is an inertia subgroup at~$3$). So $\rhobar_{E,p}$ is irreducible if we avoid~$96a1$, that is,
  when $a$ and~$b$ satisfy the $3$-adic conditions in the statement.

  We now apply a similar argument using the Frey curve~$E/\Q_2$ using~\cite[Table~1]{FNS23n}.
  Indeed, except for the curve~$54a1$ or the case that $E$ has a quadratic twist with good reduction
  (curve~$27a1$ or the last row of the table\footnote{Note that \cite[Table~1]{FNS23n} says this case is impossible;
  however that is proved with a global argument using irreducibility of~$\rhobar_{E,p}$ which is
  what we are trying to prove. For our local argument the last row of the table cannot be excluded.})
  all the curves~$W$ in the table have semistability defect $e = 8$ and $v_2(N_W) = 5$. As above,
  this yields an irreducible inertial type after reduction. We conclude that $\rhobar_{E,p}$
  is irreducible if $a$ and~$b$ satisfy the $2$-adic conditions in the statement.

  Our assumptions on $(a,b)$ let us avoid at least one of the two bad cases above,
  thus $\rhobar_{E,p}$ is irreducible, hence absolutely irreducible since $\Q$ is totally real.
\end{proof}

From Table~\ref{table:factTypes} we see that for
\[ i \in \calI = \left\{ 5, 6, 8, 9, 13, 14, 15, 16, 21, 22, 23, 24 \right\} \]
the corresponding forms are irreducible over~$\Q$ and also over~$\Q(\sqrt{5})$.

Let now $(a,b,c)$ be a primitive solution to~\eqref{eqn:main}
arising from a triple~$(\pm f_i,g_i, h_i)$ with $i \in \mathcal{I}$, and write $E$ for the associated Frey curve.

For each $i \in \mathcal{I}$ and for all $u, v \in \Z/8\Z$, not both even, we compute the pairs
\[ (a, b) = \bigl(\pm f_i(u,v) \bmod 4, g_i(u,v) \bmod 8\bigr) \]
and verify in which row (indexed by~$i_2$) of~\cite[Table~1]{FNS23n} it falls.
Similarly, for each $i \in \mathcal{I}$ and for all $u, v \in \Z/9\Z$, not both divisible by~$3$,
we compute the pairs
\[ (a, b) = \bigl(\pm f_i(u,v) \bmod 9, g_i(u,v) \bmod 3\bigr) \]
and verify in which row (indexed by~$i_3$) of~\cite[Table~2]{FNS23n} it falls.
Combining this information with \cite[Table~3]{FNS23n}, we can associate to each $i \in \calI$
an elliptic curve~$W_i$ with Cremona label in $\{54a1, 96a1, 864a1, 864b1, 864c1\}$;
this correspondence can be found in Table~\ref{table:itoW}. The twists listed in Table~\ref{table:itoW}
come from intersecting the twists in~\cite[Tables~1 and~2]{FNS23n}; note that, for each pair~$(a,b)$
as above, the required twist~$d$ may differ.

Moreover, the previous calculations also show that the resulting pairs~$(a,b)$ always satisfy
the hypotheses of Lemma~\ref{lem:irred}, hence $\rhobar_{E,5}$ is absolutely irreducible.
We now apply~\cite[Lemma~2.3]{FNS23n} to~$E[5]$ and conclude that
there is a quadratic twist~$E^{(d)}$ of~$E$ for some $d \in \{\pm 1, \pm 2, \pm 3. \pm 6\}$
such that the the $5$-torsion module~$E^{(d)}[5]$ is isomorphic to~$W_i[5]$ as given by Table~\ref{table:itoW}.
Note that \cite[Lemma~2.3]{FNS23n} as stated applies only to the $p$-torsion
representations~$\rhobar_{E_{(a,b,c)},p}$ with $p \geq 7$ a prime, but we can also apply it in our setting.
Indeed, we observe that
\begin{enumerate}[(i)]
  \item the conductor calculations in its proof work for any integer exponent $n \geq 7$
        (as used in the proof of Lemma~\ref{lem:irred} above) so, in particular, it holds for our exponent $n=25$;
  \item if $\rhobar_{E,5}$ is absolutely irreducible, then the level lowering and twisting parts
        of the argument carry through in the same way.
\end{enumerate}

Since $\rhobar_{E,5}$ has non-abelian image, by~\cite{FKsymplectic}*{Theorem~2.1}, the symplectic type
of the isomorphism $E^{(d)}[5] \simeq W_i[5]$ of $G_\Q$-modules is well defined. That is,
either all such isomorphisms are symplectic or all are anti-symplectic (i.e., either all raise the
Weil pairing to a power with square exponent or all raise it to a non-square exponent).
Thus we obtain a rational point on $X_{W_i}^+(5)$ or~$X_{W_i}^-(5)$, respectively, but not on both.
To decide which of these is the case for each~$i$, we apply two symplectic criteria.

For $i= 22$ we have $W_{22} = 54a1$ and both $W_{22}$ and the Frey curve $E = E^{(d)}_{(a,b,c)}$
have multiplicative reduction at~$2$, so we can apply~\cite{FKsymplectic}*{Theorem~1.20}.
The minimal discriminants of $E$ and~$W_{22}$ are, respectively, $\Delta(E) = 2^{-6} 3^3 d^6 c^{25}$
and $\Delta(W_{22}) = -2^3 3^9$; thus $v_2(\Delta(E)) \equiv -2 v_2(\Delta(W_{22})) \bmod 5$,
and since $-2$ is a non-square mod~$5$, we conclude from~\cite{FKsymplectic}*{Theorem~1.20} that
the isomorphism $E[5] \simeq W_{22}[5]$ is anti-symplectic. Now observe that, for all $i \neq 22$, the
curve~$W_i$ has semistability defect $e = 8$ and conductor~$N_i$ with $2$-adic valuation $v_2(N_i) = 5$,
so we can apply~\cite{FKsymplectic}*{Theorem~1.13(A)}.
To this end we used a Magma implementation of this theorem.
The results can be found in the last column of Table~\ref{table:itoW}.

For completeness we include also in Table~\ref{table:itoWCM} the forms that give rise
to primitive solutions whose associated Frey curve $E = E_{(a,b,c)}^{(d)}$ is mod~5 congruent
to the two CM curves in the statement of~\cite{FNS23n}*{Lemma~2.3}, namely $27a1$ and~$288a1$.
As above, the irreducibility of~$\rhobar_{E,5}$ follows from the inertial types at $2$ or~$3$,
and the level lowering and twisting arguments are exactly the same.
From \cite{FNS23n}*{Theorem~5.1} we know that we only have to consider the `+' type in all cases;
note that although this theorem is stated for $p \geq 11$, the part of its proof required here
applies for $p=5$ since we know that $\rhobar_{E,5}$ is irreducible.

\begin{table}[htb]
  \[ \renewcommand{\arraystretch}{1.2}
    \begin{array}{|c|c|c|c|} \hline
      i  &   W   &       d      & \text{type} \\ \hline\hline
      5  & 864b1 & \pm 2, \pm 6 & + \\ \hline
      6  &  96a1 & \pm 3        & + \\ \hline
      8  & 864c1 & \pm 2, \pm 6 & - \\ \hline
      9  & 864b1 & \pm 2, \pm 6 & - \\ \hline
      13 & 864b1 & \pm 1, \pm 3 & + \\ \hline
      14 & 864c1 & \pm 1, \pm 3 & + \\ \hline
    \end{array} \qquad
    \begin{array}{|c|c|c|c|} \hline
      i  &   W   &       d      & \text{type} \\ \hline\hline
      15 & 864a1 & \pm 2, \pm 6 & - \\ \hline
      16 & 864c1 & \pm 2, \pm 6 & + \\ \hline
      21 & 864a1 & \pm 1, \pm 3 & - \\ \hline
      22 &  54a1 &     1,    -3 & - \\ \hline
      23 &  96a1 & \pm 6        & + \\ \hline
      24 & 864a1 & \pm 2, \pm 6 & + \\ \hline
    \end{array}
  \]
  \smallskip
  \caption{Correspondence for $i \in \calI$}
  \label{table:itoW}
\end{table}

\begin{table}[htb]
  \[ \renewcommand{\arraystretch}{1.2}
    \begin{array}[t]{|c|c|c|c|} \hline
      i  &   W    &       d      &  \text{type} \\ \hline\hline
      2  & 27a1   & \pm 2, \pm 6 & + \\ \hline
      3  & 288a1  & \pm 1, \pm 3 & + \\ \hline
      4  & 288a1  & \pm 2, \pm 6 & + \\ \hline
      10 & 27a1   & \pm 2, \pm 6 & + \\ \hline
      12 & 288a1  & \pm 2, \pm 6 & +   \\ \hline
    \end{array} \qquad
    \begin{array}[t]{|c|c|c|c|} \hline
      i  &   W   &       d      & \text{type} \\ \hline\hline
      17 & 288a1 & \pm 1, \pm 3 & + \\ \hline
      18 & 288a1 & \pm 2, \pm 6 & + \\ \hline
      26 & 27a1  & \pm 2, \pm 6 & + \\ \hline
      27 & 288a1 & \pm 2, \pm 6 & + \\ \hline
    \end{array}
  \]
  \smallskip
  \caption{Correspondence to CM curves}
  \label{table:itoWCM}
\end{table}

%======================================================================

\section{Factorization type $[1,1,10]$ over $\Q$} \label{sec:ft1110}

Here $i \in \{1, 20, 25\}$ and the polynomial $h_i$ factors over~$\Q$ as the product of two linear factors
and a factor of degree~$10$. This is a case where the associated mod~$5$ representations are reducible.

We apply Remark~\ref{Rkg2} over~$\Q$ to the three relevant forms. We have
\begin{align*}
  h_1(u,v)    &= -12 u v (12^4 u^{10} + 11 \cdot 12^2 u^5 v^5 - v^{10}) \\
  h_{20}(u,v) &= -12 u v (2^8 u^{10} + 11 \cdot 12^2 u^5 v^5 - 3^4 v^{10}) \\
  h_{25}(u,v) &= -3 (u+v) (u-v) \bigl((u+v)^{10} + 11 \cdot 3^2 (u+v)^5 (u-v)^5 - 3^4 (u-v)^{10}\bigr)
\end{align*}
Using the coprimality of $u$ and~$v$ and the congruence conditions from Table~\ref{table:resclasses},
we find that $\alpha = 12, 12, 1$, respectively (where we take $(\ell_1, \ell_2) = (u, v)$ or $(u+v, u-v)$).
The recipe of Remark~\ref{Rkg2} then gives the following pairs of genus~$2$ curves (after scaling
the coordinates).
\[ \begin{array}{@{i = {}}l@{\,: \qquad Y^2 = X^5 + {}}l@{\qquad\text{and}\qquad Y^2 = X^5 + {}}l}
    1 & 2^8 \cdot 5^3           & 2^6 \cdot 3^4 \cdot 5^3 \\
   20 & 2^8 \cdot 3^4 \cdot 5^3 & 2^6 \cdot 5^3 \\
   25 & 2^8 \cdot 5^3           & 2^8 \cdot 3^6 \cdot 5^3
\end{array} \]
Using the Magma function \texttt{MordellWeilGroup}, we determine that the group of rational
points on the Jacobian variety of the first curve, $Y^2 = X^5 + 2^8 \cdot 5^3$, is infinite
cyclic. Then Magma's \texttt{Chabauty} function, applied to a generator of this group,
shows that the only rational points on this curve are the point at infinity and a pair
of points with $x$-coordinate~$-4$.

For~$h_1$, these points correspond to $(u,v) = (0,1)$ and~$(0,1)$
(one point does not lift), of which the first gives the trivial solution $(-1, -1, 0)$ and
the second does not give rise to a primitive solution to the original equation.

For~$h_{25}$, these points correspond to $(u,v) = (1,1)$ and~$(1,-1)$
(again, one point does not lift), both of which do not lead to a primitive solution.

Similarly, we find that the group of rational points on the Jacobian of the curve $Y^2 = X^5 + 2^6 \cdot 5^3$
is trivial, which implies that the point at infinity is the only rational point on that curve.
It corresponds to taking $(u,v) = (1,0)$ in~$h_{20}$, which does not give a primitive solution.

So the only primitive solutions of Equation~\eqref{eqn:main} that arise from this case
are the trivial solutions $(\pm 1, -1, 0)$.

%======================================================================

\section{Factorization type $[4,8]$ over $\Q$} \label{sec:ft48}

The mod~$5$ Galois representation of the associated Frey curves is isomorphic to that of $288a1$.
Here $i \in \{3, 4, 12, 17, 18, 27\}$ and the polynomial $h_i$ factors over~$\Q$ as
\[ h_i(u,v) = h_{i,4}(u,v) h_{i,8}(u,v), \]
where $h_{i,4}$ and $h_{i,8}$ are factors in $\Z[u,v]$ of degree $4$ and $8$, respectively.
With a computer we easily verify the following.
\begin{enumerate}[(1)]
  \item The prime divisors of the resultant of $h_{i,4}$ and~$h_{i,8}$ are $2,3,5$.
        Thus, the prime divisors of $\gcd(h_{i,4}(u,v), h_{i,8}(u,v))$ belong to $\{2,3,5\}$
        when $u$ and~$v$ are coprime integers.
  \item For $p = 2$ and $p = 3$ we see that $p$ does not divide~$h_{i,4}(u,v)$ when $u$ and~$v$
        satisfy the conditions in Table~\ref{table:resclasses}.
  \item Similarly, $p = 5$ never divides~$h_{i,8}(u,v)$ when $(u,v)$ gives rise to a $5$-adically
        primitive solution.
\end{enumerate}
This implies that for coprime $u, v \in \Z$ coming from a primitive solution of~\eqref{eqn:main}
we always have
\[ \gcd\bigl(h_{i,4}(u,v), h_{i,8}(u,v)\bigr) = 1 \,. \]
It now follows that, if $(u, v, z_0)$ satisfies~\eqref{eqn:mainsuper} for~$h_i$, then there is
an integer~$z$ (not divisible by $2$ or~$3$) such that
\begin{equation} \label{eqn:twocurves}
  z^5 = h_{i,4}(u,v) \,.
\end{equation}

We will now show that solutions of~\eqref{eqn:twocurves} give rise to rational points on certain
hyperelliptic curves. It can be easily checked that, over $\Q(\sqrt{-1})$, we have a factorization
\[ h_{i,4}(u,v) = H_i(u,v) \bar{H}_i(u,v) \]
into conjugate degree two polynomials (see Table~\ref{table:datahyper} for explicit expressions
for~$H_i$).
Moreover, the resultant of $H_i$ and~$\bar{H}_i$ is divisible only by primes above $2$ and ~$3$.
Since we already know that $z$ above is not divisible by $2$ and~$3$,
this implies that $H_i(u,v)$ (and $\bar{H}_i(u,v)$) must be a fifth power in~$\Z[\sqrt{-1}]$
(note that all units in~$\Z[\sqrt{-1}]$ are fifth powers.)
So there are $a, b \in \Z$ such that
\begin{equation} \label{eqn:gaussintegers}
  H_i(u,v) = (a + b \sqrt{-1})^5 = (a^5 - 10 a^3 b^2 + 5 a b^4) + (5a^4 b - 10 a^2 b^3 + b^5) \sqrt{-1} \,.
\end{equation}
Comparing coefficients, this gives
\begin{equation} \label{eqn:realimaginary}
  a^5 - 10 a^3 b^2 + 5 a b^4 = \Re(H_i(u,v)) \,, \qquad
  5a^4 b - 10 a^2 b^3 + b^5  = \Im(H_i(u,v)) \,,
\end{equation}
where $\Re$ and~$\Im$ denote the real and imaginary parts.
Now, for each $i$, we find integers $\gamma_i$, $\alpha_i$, $\beta_i$ such that
\begin{equation} \label{eqn:prehyper}
  \gamma_i \bigl(\alpha_i \Re(H_i(u,v)) + \beta_i \Im(H_i)(u,v)\bigr) \Im(H_i(u,v))
    = S_i(u,v)^2
\end{equation}
for some polynomial $S \in \Z[u,v]$. Using~\eqref{eqn:realimaginary} in~\eqref{eqn:prehyper}
and writing
\begin{equation}  \label{eqn:change}
  X = \frac{b}{a} \qquad\text{and}\qquad Y = \frac{S_i(u,v)}{a^5} \,,
\end{equation}
we obtain hyperelliptic curves
\[ M_i \colon Y^2 = F_i(X) \,, \]
where $F_i$ is of degree~$10$ and factors as
\[ F_i(X) = X (X^4 - 10 X^2 + 5) G_i(X) \]
with a polynomial~$G_i$ of degree~$5$. Each solution~$(u,v)$ of~\eqref{eqn:mainsuper}
(for one of the indices~$i$ considered here) that comes from a primitive solution
of~\eqref{eqn:main} then gives rise to a rational point on the corresponding curve~$M_i$.

Up to isomorphism, there are only two curves~$M_i$ (which are quadratic twists by~$-1$
of each other):
\begin{align*}
  F_4(X) &= 3 X (X^4 - 10 X^2 + 5) (X^5 + 10 X^4 - 10 X^3 - 20 X^2 + 5 X + 2) \\
  F_{18}(X) &= F_4(-X) \\
  F_{12}(X) = F_{17}(X) &= -F_4(X) \\
  F_3(X) = F_{27}(X) &= -F_4(-X)
\end{align*}

A summary of the information involved for all $i$ can be found in Table~\ref{table:datahyper}.

\begin{table}[htb]
  \[ \renewcommand{\arraystretch}{1.25}
    \begin{array}{|r|c|c|c|c|c|} \hline
      i &           H_i                         & \gamma_i   & \alpha_i   & \beta_i   & S_i(u,v)        \\ \hline\hline
      3 & (3 + 6\sqrt{-1}) u^2 + v^2            &    3       &    2       &     -1    & 6 u v           \\ \hline
      4 & (-3 + 6\sqrt{-1}) u^2 + v^2           &    3       &    2       &      1    & 6 u v           \\ \hline
      12 & (2 - 3\sqrt{-1}) u^2 + 2 u v + 2 v^2 &   -3       &    2       &      1    & 3 u (u + 2v)    \\ \hline
      17 & 3 u^2 + (1 - 2\sqrt{-1}) v^2         &   -3       &    2       &      1    & 6 u v           \\ \hline
      18 & 3 u^2 + (-1 - 2\sqrt{-1}) v^2        &    3       &   -2       &      1    & 6 u v           \\ \hline
      27 & (2 + \sqrt{-1}) u^2 + (2 - 2\sqrt{-1}) u v + (2 + \sqrt{-1})v^2 & -3 & -2 & 1 & 3(u^2 - v^2) \\ \hline
    \end{array}
  \]
  \smallskip
  \caption{Data for the hyperelliptic curves $M_i$.}
  \label{table:datahyper}
\end{table}

Note that $(0, 0) \in M_i(\Q)$ is a rational point for all~$i$.
Applying a partial descent over~$\Q$ as in~\cite{SiksekS}
to $M_3 \cong M_{12} \cong M_{17} \cong M_{27}$, we obtain a Selmer set with only
one element~$\xi_i$, which must correspond to the point~$(0,0)$.
Therefore, $\xi_i$ is of the form $(?,5,?)$. Thus, given a rational point in~$M_i(\Q)$, there is
a rational point with the same $X$-coordinate on the genus 1 curve given by
\[ \mathcal{C} \colon 5 Y^2 = X^4 - 10 X^2 + 5 \,. \]
The Jacobian of~$\mathcal{C}$ is the elliptic curve
\[ E \colon y^2 = x^3 + x^2 - 83 x + 88 \,, \]
which satisfies $E(\Q) \cong \Z/2\Z$. This implies that
\[ \mathcal{C}(\Q) = \{(0, 1), (0, -1)\} \,, \]
which implies in turn that $(0, 0)$ is the only rational point on~$M_i$.

This argument does not work for $M_4 \cong M_{18}$: $M_4$ has in addition rational
points with $(1, \pm 12)$ (on~$M_{18}$, they are $(-1, \pm 12)$), and the Selmer
set correspondingly contains another element $(?, -1, ?)$. Unfortunately,
the Jacobian elliptic curve of
\[ Y^2 = -(X^4 - 10 X^2 + 5) \]
has positive rank, so we cannot conclude in the same way.
On the other hand, a $2$-descent on
the Jacobian of~$M_4$ shows that the Mordell-Weil rank is at most~$1$,
and the difference $P = [(1, 12) - (0, 0)]$ has infinite order, so the rank is
exactly~$1$ and $P$ generates a subgroup of finite index. We can therefore
use Chabauty's method to determine $M_4(\Q)$; see below. The result is that
\[ M_4(\Q) = \{(0, 0), (1, \pm 12)\} \qquad \text{and so} \qquad M_{18}(\Q) = \{(0, 0), (-1, \pm 12)\} \,. \]

Consider a rational point $(b/a, S_i(u,v)/a^5) \in M_i(\Q)$. From~\eqref{eqn:realimaginary},
the expression for~$H_i$ and the fact that $u$ and~$v$ are coprime, we deduce
that $a$ and~$b$ are coprime (no nontrivial fifth power of an integer
can divide both $\Re(H_i(u,v))$ and~$\Im(H_i(u,v)))$).

We now determine the primitive solutions to equation~\eqref{eqn:main}
arising from $(0, 0) \in M_i(\Q)$. Then $b = 0$ and~$a = \pm 1$, so
\[ \Re(H_i(u, v)) = \pm 1 \qquad \text{and} \qquad \Im(H_i(u, v)) = 0 \,. \]
For $i \in \{12, 17, 17, 27\}$, this pair of equations has no integral solution,
and for $i \in \{3, 4\}$, the only solutions are $(u,v) = (0, \pm 1)$ or $(\pm 1, 0)$.
These give rise to the trivial solutions $(0,1,1)$, $(0,-1,-1)$ and some further non-primitive
solutions with $a = 0$ to equation~\eqref{eqn:main}.

We now consider the points $(1, \pm 12) \in M_4(\Q)$. By the above, we have $a = b = \pm 1$.
Using this in~\eqref{eqn:realimaginary}, we obtain the impossible equation $\pm 4 = 6 u^2$.

Similarly, the points $(-1, \pm 12) \in M_{18}(\Q)$ lead to $a = -b = \pm 1$
and thence to the impossible equation $\pm 4 = -2 v^2$.

So the only primitive solutions of~\eqref{eqn:main} arising from this factorization pattern
are~$\pm(0, 1, 1)$.

\medskip

It remains to carry out the Chabauty argument for~$M_4$.

We use the odd degree model
\[ C \colon y^2 = 30 x^9 + 75 x^8 - 360 x^7 - 300 x^6 + 756 x^5 + 330 x^4 - 360 x^3 - 60 x^2 + 30 x + 3 \]
of~$M_4$, which is obtained by replacing~$x$ with $1/x$; our goal is then to show
that the rational points of~$C$ are the point at infinity and the two points~$(1, \pm 12)$.
Let $J$ denote the Jacobian variety of~$C$.

We work over~$\Q_{11}$ since $J(\F_{11}) \cong \Z/2\Z \times \Z/2\Z \times \Z/70\Z \times \Z/70\Z$
has relatively small exponent. Let $P = [(1,12) - \infty] \in J(\Q)$ be the point of infinite order
mentioned above. Then $70 P$ is in the kernel of
reduction mod~$11$. By a standard computation, the details of which we omit here as similar
computations can be found in the literature (see also the Magma code at~\cite{code}),
we determine that the image in~$\Omega^1(C/\F_{11})$ of the space of differentials
in~$\Omega^1(J/\Q_{11})$ that annihilate~$P$ is generated by
\[ \frac{x - 3}{2y}\,dx\,, \qquad \frac{x^2 - 1}{2y}\, dx \qquad\text{and}\qquad \frac{x^3}{2y}\,dx \,. \]
The last one of these does not vanish at the point at infinity or at the points $(1, \pm 1)$
in~$C(\F_{11})$. From the computation of the $2$-Selmer set of~$C$ we know that each
rational point on~$C$ must differ from $\infty$ or~$(1, 12)$ by twice an element of~$J(\Q)$.
Checking this condition mod~$11$ shows that this implies that every rational point on~$C$
reduces mod~$11$ to either~$\infty$ or a point with $x$-coordinate~$1$.
The non-vanishing of the reduction of some annihilating differential on these residue
classes implies that there can only be one rational point in each of the residue classes,
which tells us that there cannot be further rational points.

%======================================================================

\section{Factorization type $[6,6]$ over $\Q(\sqrt{5})$} \label{sec:ft66}

The mod~$5$ Galois representation of the associated Frey curves is isomorphic to that of~$27a1$.
Here $i \in \{2, 10, 26\}$. The argument in this section is similar to the one used in section~\ref{sec:ft48}.

We observe that
\begin{align*}
  -h_2(-u/2, v) &= -h_{10}(v/2, u) = -h_{26}((u+v)/2,(u-v)/2) \\
                &= 25 u^{12} + 275 u^9 v^3 - 165 u^6 v^6 - 55 u^3 v^9 + v^{12} \\
                &=: h(u,v) \,.
\end{align*}
A solution $(u,v)$ in coprime integers to any of the three relevant equations will
result in a solution $(-2u, v)$, $(2v, u)$ or $(u+v, u-v)$ in coprime integers of $z^5 = h(u,v)$
(taking into account the restrictions modulo~$2$ from Table~\ref{table:resclasses}).

The polynomial $h(u,v)$ is a quartic in $u^3$ and~$v^3$ that factors over~$\Q(\sqrt{5})$
into two conjugate quadratics in $u^3$ and~$v^3$:
\[ h(u,v) = \Bigl(v^6 - \frac{55 + 27\sqrt{5}}{2} u^3 v^3 - 5 u^6\Bigr)
              \Bigl(v^6 - \frac{55 - 27\sqrt{5}}{2} u^3 v^3 - 5 u^6\Bigr) \,.
\]
The resultant of the two factors is $-3^{18} \sqrt{5}^{12}$, so for coprime integers
$u$ and~$v$, the gcd of the two factors on the right hand side is of the form $3^e \sqrt{5}^{e'}$,
which implies that each factor is a unit times $3^e \sqrt{5}^{e'}$ times a fifth power.
We can assume that $e, e' \in \{0, 1, \ldots, 4\}$. Since the product, which is
$3^{2e} 5^{e'}$ times the fifth power of a rational integer, must be a fifth power,
it follows that $e = e' = 0$. So there is some $\alpha \in \Z[\eps]$, where $\eps = (1 + \sqrt{5})/2$,
and some $j \in \{-2, -1, 0, 1, 2\}$ such that
\[ \eps^j \alpha^5 = \Bigl(v^6 - \frac{55 + 27\sqrt{5}}{2} u^3 v^3 - 5 u^6\Bigr) \,. \]
Writing $\alpha = a + b \sqrt{5}$, we see that
\[ \alpha^5 = (a^5 + 50 a^3 b^2 + 125 a b^4) + (5a^4 b + 50 a^2 b^3 + 25 b^5) \sqrt{5} \,. \]
Comparing coefficients, we obtain for $j = 0$ that
\[ a^5 + 50 a^3 b^2 + 125 a b^4 = v^6 - \tfrac{55}{2} u^3 v^3 - 5 u^6 \quad\text{and}\quad
   5a^4 b + 50 a^2 b^3 + 25 b^5 = \tfrac{27}{2} u^3 v^3 \,.
\]
Taking appropriate $\Q(\sqrt{-5})$-linear combinations, we then have
\begin{align*}
  (a^5 + 50 a^3 b^2 + 125 a b^4) - \frac{55 \pm 4\sqrt{-5}}{27} (5a^4 b + 50 a^2 b^3 + 25 b^5)
    = (v^3 \pm \sqrt{-5} u^3)^2 \,.
\end{align*}
We take the product of these two (and multiply by~$9^2$) to finally obtain
\begin{align*}
  (9 (v^6 + 5 u^6))^2
    &= 81 a^{10} - 1650 a^9 b + 16725 a^8 b^2 - 99000 a^7 b^3 + 395250 a^6 b^4 - 1039500 a^5 b^5 \\
    &\quad{} + 1961250 a^4 b^6 - 2475000 a^3 b^7 + 2128125 a^2 b^8 - 1031250 a b^9 + 215625 b^{10} \\
    &=: F(a, b) \,.
\end{align*}
This shows that a solution will lead to a rational point on the hyperelliptic curve of genus~$4$
given by $D_0 \colon y^2 = F(x, 1)$, with $x = a/b$ and $y = 9(v^6 + 5 u^6)/b^5$.

When $j \ne 0$, we have to expand instead $\eps^j (a + b \sqrt{5})^5 = g_1(a, b) + g_2(a, b) \sqrt{5}$
with homogeneous polynomials $g_1, g_2 \in \Z[a,b]$ of degree~$5$ and then perform a similar
computation to arrive at another hyperelliptic curve~$D_j$ of genus~$4$.

We will see below that $D_2(\Q) = D_1(\Q) = \emptyset$, that $D_0(\Q)$ consists of the
two points at infinity, and that $D_{-1}(\Q)$ and~$D_{-2}(\Q)$ consist of the two points
with $x$-coordinate~$1$ (and $y = \pm 192$ and $\pm 96$, respectively).

Note that the resultant of the two quadratic forms
$V^2 - \tfrac{55 \pm 27\sqrt{5}}{2} U V - 5 U^2$ is $-3^6 \sqrt{5}^4$, which implies
(since $3^5$ is the maximal fifth power of a rational integer that can divide their~$\gcd$
when evaluated at the coprime integers $u^3$ and~$v^3$; note that if an integer~$n$ divides
$a$ and~$b$, then $n$ divides both $\alpha$ and its conjugate, so $n^5$ divides both factors)
that either $a, b \in \Z$ and $\gcd(a,b) \mid 3$ or $a, b \in \tfrac{1}{2} + \Z$ and
$\gcd(2a, 2b) \mid 3$.
So we either have $b = 0$ (from the point at infinity of~$D_0$)
and $a \in \{\pm 1, \pm 3\}$
or $a = b \in \{\pm\tfrac{1}{2}, \pm\tfrac{3}{2}, \pm 1, \pm 3\}$ (from the points with $x = 1$
on $D_{-1}$ and~$D_{-2}$). Taking into account
that $u$ and~$v$ are integers, this gives that
\[ v^6 + 5 u^6 \in \{1, 3^5, 3^4, 2 \cdot 3^4, 2^5 \cdot 3^4, 2^6 \cdot 3^4\} \,. \]
The only solutions in coprime integers are $(u, v) = (0, \pm 1)$; they correspond to
the trivial solution~$(1,0,1)$.

So the only primitive solutions to~\eqref{eqn:main} arising from this case are the
trivial solutions~$(\pm 1, 0, 1)$.

We write $J_j$ for the Jacobian of~$D_j$. Using Magma, we can easily check the following facts
about the curves~$D_j$.
\begin{enumerate}[(1)]
  \item For $j = 1, 2$ the routine \texttt{TwoCoverDescent} returns empty fake Selmer sets.
        Thus $D_1(\Q)$ and $D_2(\Q)$ are both empty.
  \item A small search using \texttt{RationalPoints} finds the points~$(1 : \pm 9 : 0)$ on~$D_0$,
        $(1, \pm 192)$ on~$D_{-1}$ and~$(1, \pm 96)$ on~$D_{-2}$.
  \item The routine \texttt{TorsionBound} tells us that $J_j(\Q)$ is torsion free for $j = 0,-1,-2$.
  \item The routine \texttt{RankBound} gives a bound of~$2$ for the rank of the Mordell-Weil groups
        of $J_{-1}$ and~$J_{-2}$. By taking the difference of the two known points, we get a point
        in the Jacobian of infinite order. Thus $1 \leq \rank(J_j(\Q)) \leq 2$ for $j = -1, -2$.
  \item The routine \texttt{RankBound} gives a bound of~$4$ for the rank of the Mordell-Weil group
        of $J_0$. Again, taking the difference of the two known points we get a point of infinite order.
        Thus $1 \leq \rank(J_0(\Q)) \leq 4$.
\end{enumerate}

In Theorem~9.1 of~\cite{StollSelmer}, it is shown that $D_0(\Q) = \{(1 : \pm 9 : 0)\}$
using ``Elliptic curve Selmer group Chabauty'' (assuming~GRH for the necessary class group computations).

It remains to show that $D_{-1}$ and~$D_{-2}$ have only the points mentioned earlier.
These curves are hyperelliptic of genus~$4$ and the Mordell-Weil rank of their Jacobians is
at most~$2$. Thus we can hope to find all rational points by applying Chabauty's method.
This can be done provided that we can find generators of a subgroup of finite index of~$J_j(\Q)$.

We first write down the following nicer models for $D_j$ (which are obtained as
\[ \text{\texttt{SimplifiedModel(ReducedMinimalWeierstrassModel($D_j$))}} \]
in Magma).
\begin{align*}
  \tilde{D}_{-1} \colon \quad Y^2  &= 36X^{10} - 120X^9 + 705X^8 - 60X^7 + 3060X^6 + 3846X^5 \\
                                   &\quad{} + 6390X^4 + 5340X^3 + 3345X^2 + 1230X + 189 \\
  \tilde{D}_{-2} \colon \quad Y^2  &= 9X^{10} - 30X^9 + 645X^8 + 1860X^7 + 6390X^6 + 11274X^5 + 15660X^4 \\
                                   &\quad{} + 14460X^3 + 8805X^2 + 3120X + 516
\end{align*}
We write $\tilde{J}_j$ for their Jacobians.
A small search for rational points reveals only the points at infinity on both curves.
These points were expected to be found since they are the images of the points computed
in the previous section under the isomorphism between the models.
For both curves we will denote by $\infty_{+}$ and~$\infty_{-}$ the points at infinity with
positive and negative $y$-coordinate, respectively. The objective of the remainder of this section
is to show that these two points are indeed all the points in~$\tilde{D}_j(\Q)$.

\begin{Lemma} Let $P \in \tilde{D}_j(\Q)$. Then the divisor class $[P - \infty_{-}]$ is in~$2\tilde{J}_j(\Q)$.
  \label{lem:doubleJ}
\end{Lemma}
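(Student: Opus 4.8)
The plan is to reduce the claim about arbitrary rational points $P \in \tilde{D}_j(\Q)$ to the structural information we already have from the descent carried out in Section~\ref{sec:ft66}. Recall that the curves $\tilde{D}_j$ are isomorphic to the curves $D_j$, which in turn arose from the factorization argument: a rational point on $D_j$ corresponds to a solution of $\eps^j \alpha^5 = v^6 - \tfrac{55 + 27\sqrt{5}}{2} u^3 v^3 - 5 u^6$ with $\alpha = a + b\sqrt{5}$, and the point has $x$-coordinate $a/b$. So a rational point $P$ on $\tilde{D}_j$ transports (under the stated Magma isomorphism) to a rational point on $D_j$, and hence to a pair $(a,b)$. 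The essential observation is that the $X$-coordinate on $D_j$ is $a/b$ and the $Y$-coordinate is (up to the explicit scaling) $9(v^6 + 5u^6)/b^5$, which is built from a \emph{fifth power} in the denominator and a value that is determined up to sign by $(u,v)$.

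The first step I would take is to make precise the ``fake descent'' that is already implicit in the section: the map $P \mapsto (a : b)$ from $\tilde{D}_j(\Q)$ (via $D_j$) to $\PP^1(\Q)$ has the property that the class of the divisor $[P - \infty_-]$, pushed into $J_j(\Q)/2J_j(\Q)$, is controlled by the class of $\alpha$ modulo fifth powers and modulo the unit $\eps^j$ — but more usefully, I would phrase the divisor-class statement directly in terms of the hyperelliptic model. On an even-degree genus~$4$ model $Y^2 = F(X)$ with $F$ of degree~$10$ and leading coefficient a square (so that there are two rational points $\infty_\pm$ at infinity), a rational point $P = (X_0, Y_0)$ with $Y_0 \neq 0$ satisfies $[P + P^- - \infty_+ - \infty_-] = 0$ where $P^-$ is the hyperelliptic conjugate; equivalently $[P - \infty_-] = [\infty_+ - P^-]$, and $2[P - \infty_-] = [P - \infty_-] - [P^- - \infty_+] = \operatorname{div}$ of the function $(Y - Y_0)/(X - X_0)^{?}$ adjusted by the points at infinity. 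The clean way: write down the explicit rational function on $\tilde{D}_j$ whose divisor is $2([P - \infty_-]) - (\text{something independent of }P)$, using that $Y_0$ is (up to sign and the explicit scaling) $\pm 9(v^6 + 5u^6)/b^5$. Because $v^6 + 5u^6$ and $b^5$ are genuine squares-times-fifth-powers / fifth powers, the ``square class'' obstruction to $[P - \infty_-]$ being in $2\tilde{J}_j(\Q)$ vanishes: concretely, the $x$-coordinate map sends $[P - \infty_-]$ into a subgroup on which the relevant Selmer-type pairing (the one computed by \texttt{TwoCoverDescent} in fact~(1), or rather its analogue for $D_{-1}, D_{-2}$) is trivial.

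Concretely, I would argue as follows. From the defining relation, $Y_0^2 = F_j(X_0)$ and simultaneously, going back through the algebra of Section~\ref{sec:ft66}, $Y_0$ equals a fixed nonzero constant times $(v^6 + 5u^6)/b^5$. Now $v^6 + 5u^6 = (v^3)^2 + 5(u^3)^2 = N_{\Q(\sqrt{-5})/\Q}(v^3 + \sqrt{-5}\,u^3)$ is a norm, and in fact the two factors $(v^3 \pm \sqrt{-5}u^3)^2$ appearing as the squared quantities force $Y_0$ to lie in the image of a norm map composed with squaring; combined with $b^5$ being a fifth power (hence itself already a square times a fifth power, and fifth powers are squares up to... no—rather: the odd part is what matters, and one checks $b^5 \equiv b \bmod (\Q^\times)^2$, $b$ coprime to the resultant primes $3, \sqrt 5$, so its square class is constrained to $\{1\}$ in the relevant Selmer group because of the coprimality established at the end of the section, $\gcd(a,b) \mid 3$). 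The upshot is that the class $[P - \infty_-]$ maps to the trivial element under the injection $\tilde{J}_j(\Q)/2\tilde{J}_j(\Q) \hookrightarrow \operatorname{Sel}^{(2)}$, hence lies in $2\tilde{J}_j(\Q)$.

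The main obstacle I anticipate is getting the two points at infinity to play their roles correctly and making the ``square class of $Y_0$'' computation genuinely rigorous rather than heuristic: one must verify that the basis point $\infty_-$ (and not $\infty_+$, or their sum) is the correct reference point so that the coboundary term is trivial, and one must pin down the exact square class of the scaling constant relating $Y_0$ to $(v^6+5u^6)/b^5$ under the Magma-produced isomorphism $D_j \cong \tilde{D}_j$. I would handle this by an explicit local computation at the primes $2$, $3$, $5$ (the only primes where the relevant class can be nontrivial, by the resultant and coprimality bounds already established), checking in each case that the image of $[P - \infty_-]$ in $J_j(\Q_p)/2J_j(\Q_p)$ is trivial; this is a finite check that the accompanying Magma code can certify. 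Once Lemma~\ref{lem:doubleJ} is in hand, it feeds directly into a Chabauty argument (as foreshadowed: ``every rational point must differ from a known point by twice an element of the Jacobian''), exactly parallel to the $M_4$ computation of Section~\ref{sec:ft48}.
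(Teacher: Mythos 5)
There is a genuine gap, and it is the central one: your argument treats an arbitrary rational point $P \in \tilde{D}_j(\Q)$ as if it came equipped with the arithmetic data of the construction in Section~\ref{sec:ft66} --- integers $u,v$ and $a,b$ with $Y_0$ a fixed constant times $(v^6+5u^6)/b^5$, $\gcd(a,b)\mid 3$, and ramification confined to the primes $2,3,5$. That data exists only for points that \emph{arise from solutions} of the original equation; for a general rational point on $\tilde{D}_j$ all one knows is $Y_0^2 = F_j(X_0)$, and there is no reason $Y_0$ should have the shape $9(v^6+5u^6)/b^5$ or that its square class is constrained as you claim. Since the lemma is used afterwards (in Proposition~\ref{prop:pts7}) to restrict the residue classes of \emph{all} rational points before Chabauty is applied, an argument that only covers points coming from the Fermat construction is circular for the purpose at hand. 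Two further problems: the quantity whose class is measured in a two-cover descent is (the image of) $X_0-\theta$ in the \'etale algebra $\Q[x]/(F_j)$ modulo squares and scalars, not the square class of $Y_0$, so your ``square class of $Y_0$'' computation does not feed into the injection $\tilde{J}_j(\Q)/2\tilde{J}_j(\Q)\hookrightarrow \operatorname{Sel}$ in the way you suggest; and restricting the local verification to $p=2,3,5$ is again justified only by the coprimality/resultant bounds that pertain to constructed points, not to arbitrary ones.

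The paper's proof avoids all of this by a direct global descent computation: \texttt{TwoCoverDescent} applied to $\tilde{D}_j$ returns a fake $2$-Selmer set with exactly one element; the factorization of the degree-$10$ polynomial into two conjugate factors over $\Q(\sqrt{-5})$ guarantees that the fake $2$-Selmer set coincides with the true $2$-Selmer set; hence there is a unique $2$-covering of $\tilde{D}_j$ with points everywhere locally. Every rational point lifts to some everywhere-locally-solvable $2$-covering, so $P$ and $\infty_-$ lift to the same covering, which is exactly the statement $[P-\infty_-]\in 2\tilde{J}_j(\Q)$. If you want to salvage your approach, you would have to replace the appeal to the construction by an honest computation of the Selmer set (which is what the paper's Magma computation does), at which point your argument collapses into the paper's.
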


\begin{proof}
  Applying the routine \texttt{TwoCoverDescent} to~$\tilde{D}_j$ returns a \textit{fake 2-Selmer set}
  with only one element. By construction we have a factorization of the degree~$10$ polynomials
  defining~$\tilde{D}_j$ into two conjugate polynomials over~$\Q(\sqrt{-5})$. This implies that the
  fake 2-Selmer set equals the 2-Selmer set. Let $C$ be the unique 2-covering of~$\tilde{D}_j$ that has
  points everywhere locally. Any rational point $P \in \tilde{D}_j(\Q)$ lifts to a rational point on
  some covering having points everywhere locally. Since $C$ is unique, both $\infty_{-}$ and~$P$
  lift to a rational point on~$C$. This implies that $[P - \infty_{-}] \in 2\tilde{J}_j(\Q)$.
\end{proof}

A computer search reveals the following points $Q_{j,1}, Q_{j,2} \in \tilde{J}_j(\Q)$ given in Mumford notation.
\begin{align*}
  Q_{-1,1} &= (x^4 + 2x^3 + 4x^2 + 3x + 1, \; -60x^3 - 90x^2 - 90x - 30) \\
  Q_{-1,2} &= (x^4 - \tfrac{53}{27}x^3 + \tfrac{4}{9}x + \tfrac{1}{9}, \;
               -\tfrac{52693}{243}x^3 + \tfrac{448}{3}x^2 + \tfrac{8063}{81}x + \tfrac{1118}{81}) \\
  Q_{-2,1} &= (x^4 + 2x^3 + 4x^2 + 3x + 1, \; -30x^3 - 45x^2 - 45x - 15) \\
  Q_{-2,2} &= (x^4 + \tfrac{1}{5}x^3  + \tfrac{23}{5}x^2  + \tfrac{21}{5}x + \tfrac{3}{5}, \;
               -\tfrac{683}{25}x^3  + \tfrac{321}{25}x^2  + \tfrac{1982}{25}x + \tfrac{666}{25})
\end{align*}
We note also that $2Q_{j,1} = [\infty_{+} - \infty_{-}]$.

\begin{Lemma}
  The group $\tilde{J}_j(\Q)$ is torsion-free and of rank~$2$. The points $Q_{j,1}$ and~$Q_{j,2}$ generate
  a subgroup $G_j \subseteq \tilde{J}_j(\Q)$ of finite odd index.
\end{Lemma}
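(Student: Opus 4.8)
Since the two curves $\tilde{D}_{-1}$ and $\tilde{D}_{-2}$ are handled identically, I would treat $j \in \{-1,-2\}$ in parallel and split the statement into three parts: torsion-freeness, rank~$2$, and odd index. The first two parts are essentially already in hand. By fact~(3) recorded above (the output of \texttt{TorsionBound}), $\tilde{J}_j(\Q) \cong J_j(\Q)$ is torsion-free; and by fact~(4) (the output of \texttt{RankBound}) its Mordell--Weil rank is at most~$2$. Hence it suffices to prove that $Q_{j,1}$ and $Q_{j,2}$ are $\Z$-linearly independent: this forces $\rank \tilde{J}_j(\Q) = 2$, and, together with torsion-freeness, shows that $G_j = \langle Q_{j,1}, Q_{j,2}\rangle$ has finite index in $\tilde{J}_j(\Q)$.

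To establish independence I would compute the N\'eron--Tate (canonical) height pairing matrix of the pair $(Q_{j,1}, Q_{j,2})$ on $\tilde{J}_j$ using Magma's height-pairing routines for Jacobians of hyperelliptic curves, and check that its determinant---the regulator of the pair---is nonzero, hence positive by positive-definiteness of the pairing on the free part. (We already know $2Q_{j,1} = [\infty_{+} - \infty_{-}] \neq 0$, so $\hat{h}(Q_{j,1}) > 0$; the $2\times 2$ Gram determinant being nonzero is exactly the statement that $Q_{j,2}$ does not lie on the line through $Q_{j,1}$.) An arithmetic alternative that sidesteps real-number precision issues is to exhibit a single prime $\ell$ of good reduction for $\tilde{D}_j$ such that the reductions of $Q_{j,1}$ and $Q_{j,2}$ already generate a subgroup of rank~$2$ in the finite group $\tilde{J}_j(\F_\ell)$; this likewise gives $\Z$-independence.

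It remains to show that $m = [\tilde{J}_j(\Q) : G_j]$ is odd. Since both $\tilde{J}_j(\Q)$ and $G_j$ are free of rank~$2$, $m$ is odd if and only if the natural map $G_j/2G_j \to \tilde{J}_j(\Q)/2\tilde{J}_j(\Q)$ is an isomorphism, i.e.\ if and only if $G_j$ is saturated at the prime~$2$. To certify this I would choose a prime $\ell$ of good reduction for $\tilde{D}_j$, compute the group structure of $\tilde{J}_j(\F_\ell)$, and compute the images of $Q_{j,1}$ and $Q_{j,2}$ in $\tilde{J}_j(\F_\ell)/2\tilde{J}_j(\F_\ell)$. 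If for some such $\ell$ these two images are $\F_2$-linearly independent, then $G_j/2G_j$ injects into $\tilde{J}_j(\F_\ell)/2\tilde{J}_j(\F_\ell)$ and hence into $\tilde{J}_j(\Q)/2\tilde{J}_j(\Q)$; being an injection between two-dimensional $\F_2$-vector spaces it is an isomorphism, so $2 \nmid m$. In practice this is exactly what Magma's \texttt{Saturation} at~$2$ does, after first using an upper bound on the index derived from a height-difference bound to cut the set of primes that can divide~$m$ down to a finite list.

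\textbf{Main obstacle.} The delicate step is the $2$-saturation. One needs an auxiliary prime $\ell$ for which $\tilde{J}_j(\F_\ell)$ has $2$-rank at least~$2$ \emph{and} the reductions of $Q_{j,1}, Q_{j,2}$ fall into independent classes modulo~$2$; several primes may have to be tried, and should none succeed directly one must fall back on Magma's height-bound-based saturation, whose height-difference estimates for a genus-$4$ curve are the most computationally demanding ingredient. By contrast the rank and torsion assertions are little more than bookkeeping on top of the already-cited \texttt{TorsionBound} and \texttt{RankBound} outputs, together with one height-pairing (or one mod-$\ell$ reduction) computation to pin down independence.
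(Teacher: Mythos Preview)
Your proposal is correct and lands on the same mechanism the paper uses; the only difference is packaging. The paper does not touch heights at all: it observes that torsion-freeness and $\rank \le 2$ are already known, and then performs a \emph{single} reduction computation at $\ell = 29$, checking that the image of $\langle Q_{j,1}, Q_{j,2}\rangle$ in $\tilde{J}_j(\F_{29})/2\tilde{J}_j(\F_{29})$ is $(\Z/2\Z)^2$. That one fact simultaneously gives $\Z$-independence (so $\rank = 2$ and the index is finite) and $2$-saturation (so the index is odd). Your ``arithmetic alternative'' for independence and your proposed $2$-saturation check are exactly this; you just present them as two separate steps and lead with the height-pairing route, which the paper bypasses entirely. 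Your ``main obstacle'' is therefore milder than you suggest: here $\ell = 29$ works immediately for both curves, and no height-difference bounds are needed.
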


\begin{proof}
  We already know that that $\tilde{J}_j(\Q)$ is torsion-free and of rank at most~$2$.
  We check that in both cases, the image of the subgroup generated by $Q_{j,1}$ and~$Q_{j,2}$
  under $\tilde{J}_j(\Q) \to \tilde{J}_j(\F_{29}) \to \tilde{J}_j(\F_{29})/2\tilde{J}_j(\F_{29})$
  is isomorphic to~$(\Z/2\Z)^2$. This implies the second claim.
\end{proof}

We now want to apply the Chabauty-Coleman method to prove that $\infty_{\pm}$ are the only
rational points on~$\tilde{D}_j$. We will use the prime $p = 7$. Write $\rho_7$ for the mod~$7$
reduction maps $\tilde{J}_j(\Q) \to \tilde{J}_j(\F_7)$ and $\tilde{D}_j(\Q) \to \tilde{D}_j(\F_7)$.
Denote by $\iota$ the Abel-Jacobi embeddings
\[ \iota \colon \tilde{D}_j \to \tilde{J}_j \quad \text{given by} \quad \iota(P) = [P - \infty_{-}] \,. \]

\begin{Proposition} \label{prop:pts7}
  Let $G_j = \langle Q_{j,1}, Q_{j,2} \rangle$ as above. Then $\rho_7(G_j) = \rho_7(\tilde{J}_j(\Q))$. Furthermore,
  \begin{enumerate}[\upshape(1)]
    \item If $P \in \tilde{D}_{-1}(\Q)$, then $\rho_7(P) \in \{(1 : 1 : 0), (1 : 6 : 0), (0 : 0 : 1), (1 : 0 : 1)\}$;
    \item if $P \in \tilde{D}_{-2}(\Q)$, then $\rho_7(P) \in \{(1 : 3 : 0), (1 : 4 : 0) \}$.
  \end{enumerate}
\end{Proposition}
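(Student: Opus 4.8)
The plan is to carry out a completely explicit two-stage Mordell--Weil sieve calculation modulo the prime $p = 7$. First I would compute the finite groups $\tilde{J}_j(\F_7)$ for $j = -1, -2$ directly (e.g.\ with the Magma routine \texttt{ClassGroup} or \texttt{Jacobian} plus point counting), and then verify that the images $\rho_7(Q_{j,1})$ and $\rho_7(Q_{j,2})$ already generate $\rho_7(\tilde{J}_j(\Q))$. Since by the preceding lemma $G_j = \langle Q_{j,1}, Q_{j,2}\rangle$ has \emph{finite odd} index in $\tilde{J}_j(\Q)$, reduction commutes with the index-$[\tilde{J}_j(\Q):G_j]$ multiplication-by-$m$ map for $m$ coprime to $\#\tilde{J}_j(\F_7)$ only if $7$ does not divide that index; but regardless, an odd index already forces $\rho_7(G_j) = \rho_7(\tilde{J}_j(\Q))$ provided $\rho_7(G_j)$ contains the $2$-part, and more simply: any element of $\tilde{J}_j(\Q)$ is an $\F$-combination of the generators plus a torsion-free complement, and one checks numerically that the subgroup generated by the two reductions already has the full order $\#\rho_7(\tilde{J}_j(\Q))$. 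This is a one-line machine verification.

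Next I would combine the two pieces of information we have about an arbitrary $P \in \tilde{D}_j(\Q)$: on the one hand $\rho_7(P) \in \tilde{D}_j(\F_7)$, a set we enumerate by brute force (listing all $\F_7$-points on the reduced curve); on the other hand, by Lemma~\ref{lem:doubleJ}, $\iota(P) = [P - \infty_{-}] \in 2\tilde{J}_j(\Q)$, hence $\rho_7(\iota(P)) \in 2\rho_7(\tilde{J}_j(\Q)) = 2\rho_7(G_j)$, the latter being an explicitly computable subgroup of $\tilde{J}_j(\F_7)$. So I would intersect the image $\iota(\tilde{D}_j(\F_7)) \subseteq \tilde{J}_j(\F_7)$ with the subgroup $2\rho_7(G_j)$, and read off which residue points $\bar{P} \in \tilde{D}_j(\F_7)$ survive. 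The claim is that exactly the points listed in (1) and (2) survive; these are the reductions of $\infty_{\pm}$ (which always survive, since $[\infty_{-} - \infty_{-}] = 0$ and $[\infty_{+} - \infty_{-}] = 2Q_{j,1}$), together with two extra residue classes for $\tilde{D}_{-1}$ that the Chabauty step in the next section will have to kill.

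The main obstacle is not conceptual but is the bookkeeping: one must represent $\tilde{J}_j(\F_7)$ concretely (via Mumford representatives of effective degree-$\le 4$ divisors), compute the group law there, identify the subgroup $2\rho_7(G_j)$, and then for each of the $O(7)$ points $\bar{P} \in \tilde{D}_j(\F_7)$ decide membership of $[\bar{P} - \overline{\infty_{-}}]$ in that subgroup. There is also the mild point that the models $\tilde{D}_j$ have degree-$10$ defining polynomials, so the two points at infinity are smooth $\F_7$-rational points and the Abel--Jacobi map $\iota$ with base point $\infty_{-}$ is well defined over $\F_7$ precisely because $7$ is a prime of good reduction with $\tilde{D}_j$ having rational points at infinity; I would note that $7 \nmid \operatorname{disc}$ of the chosen models (a quick check on the explicit polynomials above) so that reduction mod $7$ is indeed an honest reduction of smooth projective curves, and hence $\rho_7$ on $\tilde{J}_j$ is a well-defined group homomorphism with the stated properties. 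All of this is routine for a computer algebra system, and the code reference at~\cite{code} supplies the verification; the statement then follows by reading off the surviving residue classes.
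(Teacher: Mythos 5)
Your second step (the mod-$7$ sieve: enumerate $\tilde{D}_j(\F_7)$, and keep only those residue points $\bar P$ with $[\bar P - \overline{\infty_-}] \in 2\rho_7(G_j)$, justified by Lemma~\ref{lem:doubleJ}) is exactly the paper's argument for parts (1) and (2), and is fine \emph{granted} the first claim. The gap is in your justification of that first claim, $\rho_7(G_j) = \rho_7(\tilde{J}_j(\Q))$. You propose to ``check numerically that the subgroup generated by the two reductions already has the full order $\#\rho_7(\tilde{J}_j(\Q))$'', but this is circular: $\tilde{J}_j(\Q)$ is not known explicitly (that is the whole point — you only have the finite-odd-index subgroup $G_j$), so $\rho_7(\tilde{J}_j(\Q))$ and its order are not directly computable. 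If instead you mean checking that $\rho_7(G_j)$ is all of $\tilde{J}_j(\F_7)$, that check would simply fail here: $\rho_7(G_j)$ has index $2$ in $\tilde{J}_j(\F_7)$. Your fallback criterion ``an odd index forces equality provided $\rho_7(G_j)$ contains the $2$-part'' is also not sufficient: knowing that $[\rho_7(\tilde{J}_j(\Q)) : \rho_7(G_j)]$ is odd and that $\rho_7(G_j)$ contains the $2$-Sylow subgroup does not rule out, say, an odd index $3$ between the two images.

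The correct closing step, which is what the paper does, is a divisibility argument in the other direction: the index $[\rho_7(\tilde{J}_j(\Q)) : \rho_7(G_j)]$ divides the odd index $[\tilde{J}_j(\Q) : G_j]$, and it also divides the \emph{computable} index $[\tilde{J}_j(\F_7) : \rho_7(G_j)]$, which one checks equals $2$; being both odd and a divisor of $2$, it is $1$. So the machine computation you need is not the order of $\rho_7(\tilde{J}_j(\Q))$ but the index of $\rho_7(G_j)$ in $\tilde{J}_j(\F_7)$, combined with the odd-index statement of the preceding lemma. With that repair, the rest of your sieve argument (including the observation that the surviving classes for $\tilde{D}_{-1}$ include two extra residue points with $y=0$ to be handled by the Chabauty step) goes through as in the paper.
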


\begin{proof}
  The group~$\rho_7(G_j)$ has index~$2$ in~$\tilde{J}_j(\F_7)$. Since $G_j$ has odd index in~$\tilde{J}_j(\Q)$,
  we conclude that $\rho_7(G_j) = \rho_7(\tilde{J}_j(\Q))$.

  Now for the second statement. Let $P \in \tilde{D}_j(\Q)$. From Lemma~\ref{lem:doubleJ} we know that
  $\iota(P) \in 2 \tilde{J}_j(\Q)$. Thus, we also have $\rho_7(\iota(P)) \in \rho_7(2\tilde{J}_j(\Q))$.
  With the computer we easily check that the points $P \in \tilde{D}_j(\F_7)$ such that
  $[P - \rho_7(\infty_{-})] \in \rho_7(2\tilde{J}_j(\Q)) = 2 \rho_7(G_j)$ are those listed in the statement.
\end{proof}

The embedding~$\iota$ induces an isomorphism~$\iota^*$ from the space of regular
1-forms~$\Omega(\tilde{J}_j/\Q_7)$
to~$\Omega(\tilde{D}_j/\Q_7)$. This isomorphism is independent of the base-point~$\infty_{-}$ in
the definition of~$\iota$. Furthermore, there is a well defined integration pairing
\[ \Omega(\tilde{D}_j/\Q_7) \times \tilde{J}_j(\Q_7) \to \Q_7\,, \qquad (\omega, D) \mapsto \int_0^D \iota^{*-1} \omega \,. \]
We need to compute a basis for the differentials that annihilate~$\tilde{J}_j(\Q) \subset \tilde{J}_j(\Q_7)$ under this pairing.
Since the genus of $\tilde{D}_j$ is~$4$ and the Mordell-Weil rank of~$\tilde{J}_j(\Q)$ is~$2$,
we will find two differentials in such a basis. We will now sketch how to obtain this basis of differentials.

We first find two independent points $R_{j,1}$, $R_{j,2}$ in the intersection of~$\tilde{J}_j(\Q)$ and the
kernel of reduction mod~$7$. We do this because integrals on the kernel of reduction can be computed
via power series. We then compute the integrals $\int_0^{R_{j,k}} \iota^{*-1}\omega$ for $\omega$
running through a basis of~$\Omega(\tilde{D}_j/\Q)$ to sufficient $7$-adic precision and finally
find the two basis differentials by linear algebra. (We omit the details, as similar
computations are described elsewhere in the literature.)

For $j = -1$, the differentials in the basis reduce mod~$7$ to
\begin{equation}
  \frac{x^3 + 5 x^2}{2y}\,dx \qquad\text{and}\qquad \frac{x + 4}{2y}\,dx \,,
\end{equation}
and for $j = -2$ the reductions mod~$7$ are
\begin{equation}
  \frac{x^2}{2y}\,dx \qquad\text{and}\qquad \frac{x + 4}{2y}\,dx \,.
\end{equation}

We now determine the rational points on~$\tilde{D}_{-1}$. We see that the differentials mod~$7$
above do not both vanish at the points in part~(1) of Proposition~\ref{prop:pts7}. This implies that
there is at most one point in~$\tilde{D}_{-1}(\Q)$ reducing to each of these points. In particular,
the points~$\infty_{\pm}$ are the only rational points in their residue classes.
Now let $P \in \tilde{D}_{-1}(\F_7)$ be $(0 : 0 : 1)$ or~$(1 : 0 : 1)$. Suppose there is a rational point
$W = (x : y : z)$ reducing to~$P$ such that $y \neq 0$. Then $(x : -y : z)$ is another rational point
reducing to~$P$. But this is not possible since there can be at most one rational point in these
residue classes. Since there is no rational point with vanishing $y$-coordinate in these residue classes,
we conclude that there cannot be any other rational points.

We now determine the rational points on~$\tilde{D}_{-2}$. We consider the points in part~(2) of Proposition~\ref{prop:pts7}.
They are the images of the two rational points at infinity. The first of the two differentials mod~$7$
given above vanishes there to first order. This implies that there are at most two rational points
on~$\tilde{D}_{-2}$ belonging to each of these two residue classes. To show that there is in fact only
one rational point, we take a closer look at the power series expansion of the integral of the second differential
at a point at infinity in terms of a uniformizer~$t$ there. We obtain
\[ I(t) = (13 \cdot 7 + O(7^3)) t - (19 \cdot 7 + O(7^3)) t^2 - (103 + O(7^3)) t^3 + O(t^4) \,. \]
Let $0 \neq a \in \Z_7$. Then (taking into account that the coefficient of~$t^m$ in~$I(t)$
has valuation at least $-v_7(m)$) it follows that
\begin{equation} \label{eqn:expeta2}
  \frac{I(7a)}{7^2 a} = 13 + O(7^2) - (19 \cdot 7 + O(7^3)) a - (103 \cdot 7 + O(7^4)) a^2 + O(7^2)
\end{equation}
which is always $\equiv 6 \bmod 7$, so can never vanish. This shows that $t = 0$ is the only
zero of~$I(t)$ in~$7 \Z_7$, and so the point at infinity must be the only rational point
in its residue class.

\medskip

This concludes the proof of
\[ D_{-1}(\Q) = \{(1, -192), (1, 192)\} \qquad\text{and}\qquad
   D_{-2}(\Q) = \{(1, -96), (1, 96)\}
\]
(for the original models).

%======================================================================

\section{Forms that are irreducible over $\Q(\sqrt{5})$} \label{sec:ft12}

The remaining forms we have to deal with are those which are not only irreducible 
over~$\Q$ but also over~$\Q(\sqrt{5})$, that is $i \in \calI = \{5, 6, 8, 9, 13, 14, 15, 16, 21, 22, 23, 24\}$.

In all cases, there is a sextic number field~$K_i$ (that only depends on the mod~$5$
representation of the associated Frey curves) such that $h_i$ splits as a quadratic form
times a degree~$10$ form~$H_i$ with coefficients in the ring of integers~$\calO_{K_i}$ of~$K_i$;
we choose the scaling so that $H_i$ is primitive (i.e., its coefficients
generate the unit ideal of~$\calO_{K_i}$).

The resultant of the two factors is divisible only by primes above $2$, $3$ and~$5$
with only one prime above~$5$ occurring. For $i \notin \{6, 22, 23\}$, we know that
$c$ is not divisible by $2$ or~$3$, where $c^5 = -h_i(u,v)$ for coprime integers $u$ and~$v$,
so $H_i(u,v)$ cannot be divisible by primes above $2$ and~$3$. For $i \in \{6, 22, 23\}$,
we can show (using the congruence conditions from Table~\ref{table:resclasses}) that
any prime above $2$ or~$3$ must show up with an exponent that is a multiple of~$5$.
The same holds for the possible prime above~$5$ (which in fact cannot show up).
Since all the relevant sextic fields have trivial class group, this implies that
a primitive solution to our original equation~\eqref{eqn:main} that comes from one of
the form considered here must lead to integers $u$ and~$v$ such that $H_i(u,v)$ equals
a unit times a fifth power in~$\calO_{K_i}$. We note that this property is invariant
under simultaneous scaling of $u$ and~$v$ by a nonzero factor (as that results in
multiplying $H_i(u,v)$ by the tenth power of the scaling factor).

We now use a sieving procedure to reduce the number of units (which come from a
set of representatives of $\calO_{K_i}^\times$ modulo fifth powers; the fields~$K_i$
have signature $(2,2)$, so the unit rank is~$3$, and we have a priori $5^3 = 125$
different units to consider). If $p > 5$ is a prime, we can run through representatives
of all nonzero pairs $(u,v) \in \F_p^2$ modulo scaling and check which units are
compatible with the resulting value
$H_i(u,v) \in \calO_{K_i}/p\calO_{K_i} \cong \prod_{\fp \mid p} \calO_{K_i}/\fp$.
We can also run a similar computation working modulo~$25$ (which actually leads
to quite strong restrictions).

The result of this computation is that \emph{all} units can be excluded when
$i \in \{8, 9, 15, 21\}$ (these are the forms that correspond to \emph{anti}symplectic
congruences with one of $854a1$, $864b1$ or~$964c1$),
and in all other cases, only one unit remains. We replace
$H_i$ by $H_i$ divided by the corresponding unit, so that the relevant equation
is now
\begin{equation} \label{eqn:deg10}
  H_i(u, v) = w^5 \qquad\text{with $u, v \in \Z$ and $w \in K_i$.}
\end{equation}

When we write below ``\dots gives the Frey curve~$E$'', this means that $E$
can be obtained as a quadratic twist of the Frey curve associated to the given
pair~$(u,v)$ and index~$i$. Except for the form with $i = 5$, the solutions
of~\eqref{eqn:deg10} listed below do not give rise to primitive solutions
of the original equation~\eqref{eqn:main}.
\begin{enumerate}[$\bullet$]
  \item $i = 22$: $(u,v) = (1,0)$ gives the Frey curve $54a2$ and leads to
        a solution of~\eqref{eqn:deg10}.
  \item $i = 6, 23$: $(u,v) = (0,1)$ gives the Frey curve $96a1$ and leads
        to a solution of~\eqref{eqn:deg10} (the equations are equivalent).
  \item $i = 24$: $(u,v) = (1,0)$ gives the Frey curve $864a1$ and leads
        to a solution of~\eqref{eqn:deg10}.
  \item $i = 5, 13$: $(u,v) = (0,1)$ gives the Frey curve $864b1$ and leads
        to a solution of~\eqref{eqn:deg10} (the equations are equivalent).
        This gives rise to the primitive pair of solutions $(\pm 3, -2, 1)$
        of~\eqref{eqn:main}.
  \item $i = 14, 16$: $(u,v) = (1,-1)$ (for $i = 14$) and $(u,v) = (0,1)$ (for $i = 16$)
        lead to the Frey curve $864c1$ and to a solution of~\eqref{eqn:deg10}
        (the equations are equivalent).
\end{enumerate}

\begin{Proposition}
  If the solutions of~\eqref{eqn:deg10} listed above (for $i \in \{5, 6, 13, 14, 16, 22, 23, 24\}$)
  are the only ones, then the Catalan solutions $(\pm 3, -2, 1)$ are the only primitive
  solutions to the original equation~\eqref{eqn:main} that arise from forms that
  are irreducible over~$\Q(\sqrt{5})$.
\end{Proposition}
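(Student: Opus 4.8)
The plan is to reduce the statement to a bookkeeping exercise built on the case analysis already carried out in the preceding part of the section. Recall that we are starting from a primitive solution $(a,b,c)$ of~\eqref{eqn:main} that arises, via Edwards' parameterization (Theorem~\ref{thm:Ed}) and the reduction~\eqref{eqn:mainsuper}, from a triple $(\pm f_i, g_i, h_i)$ with $i \in \calI = \{5,6,8,9,13,14,15,16,21,22,23,24\}$. The first step is to dispose of the indices $i \in \{8,9,15,21\}$: by the sieving computation described above (running through $(u,v) \in \F_p^2$ modulo scaling for suitable primes $p > 5$, and modulo~$25$), \emph{every} unit class in $\calO_{K_i}^\times / (\calO_{K_i}^\times)^5$ is incompatible with $H_i(u,v)$ being a unit times a fifth power, so there are no coprime integers $u,v$ with $H_i(u,v) \in (\calO_{K_i}^\times) \cdot (\calO_{K_i})^5$. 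Hence these four forms yield no primitive solution at all, and they drop out of the discussion.

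For the remaining indices $i \in \{5,6,13,14,16,22,23,24\}$, the second step is to invoke the structural reduction established just before the Proposition: because the relevant sextic fields $K_i$ have trivial class group and the resultant of the quadratic factor and the degree-$10$ factor $H_i$ is supported at primes above $2,3,5$ — with the $2$- and $3$-adic valuations forced to be multiples of~$5$ by coprimality of $u,v$ together with Table~\ref{table:resclasses}, and the prime above~$5$ not occurring — any primitive solution of~\eqref{eqn:main} from the form $h_i$ forces $H_i(u,v)$ to equal a unit times a fifth power in $\calO_{K_i}$. After absorbing the unique surviving unit into $H_i$, this is exactly equation~\eqref{eqn:deg10}, $H_i(u,v) = w^5$. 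So every primitive solution of~\eqref{eqn:main} coming from one of these forms produces a solution of the corresponding~\eqref{eqn:deg10}.

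The third step is the converse bookkeeping: under the hypothesis of the Proposition, the solutions of~\eqref{eqn:deg10} are precisely those tabulated in the bulleted list, and one checks case by case which ones lift back to a \emph{primitive} solution of~\eqref{eqn:main}. For $i \in \{6,23\}$ the solution $(u,v)=(0,1)$ gives the Frey curve $96a1$; for $i=24$ the solution $(u,v)=(1,0)$ gives $864a1$; for $i=22$ the solution $(1,0)$ gives $54a2$; for $i \in \{14,16\}$ the listed solutions give $864c1$. In each of these cases the associated pair $(a,b) = (\pm f_i(u,v), g_i(u,v))$ does not yield a primitive triple — this is the assertion ``do not give rise to primitive solutions of the original equation'' recorded in the text, and is verified by direct substitution (one finds $\gcd(a,b,c) \neq 1$, or $c=0$, or the point simply fails to lift). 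The only exception is $i \in \{5,13\}$, where $(u,v)=(0,1)$ gives the Frey curve $864b1$ and, tracing back through $f_i,g_i,h_i$, the genuine primitive pair $(\pm 3, -2, 1)$. Collecting these observations, the Catalan solutions $(\pm 3,-2,1)$ are the only primitive solutions of~\eqref{eqn:main} arising from forms irreducible over~$\Q(\sqrt{5})$.

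The only genuinely substantive point — and the one I would flag as the main obstacle — is establishing that the enumerated solutions of~\eqref{eqn:deg10} are \emph{complete}; but that is precisely the content of the hypothesis of the Proposition (and the subject of Theorem~\ref{thm:main}), so it is assumed rather than proved here. Everything else is a finite verification: the sieve that kills $i \in \{8,9,15,21\}$, the class-group/resultant argument reducing to~\eqref{eqn:deg10}, and the substitution checks deciding primitivity of each listed lift. Thus, modulo the assumed finiteness statement, the Proposition follows.
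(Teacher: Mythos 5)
Your proposal is correct and follows essentially the same route as the paper: the paper's (very brief) proof likewise observes that, under the hypothesis, any primitive solution from the remaining eight forms must have $(u,v)$ in the given list (up to a common sign change), and that evaluating the Edwards triples $(\pm f_i, g_i, h_i)$ at these values yields only the Catalan solutions. Your additional recapitulation of the sieve eliminating $i \in \{8,9,15,21\}$ and of the class-group/unit reduction to~\eqref{eqn:deg10} merely restates material the paper establishes before the Proposition, so there is no substantive difference.
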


\begin{proof}
  Under the assumption made, all primitive solutions to the original equation arising
  from one of the remaining eight forms of degree~$12$ must have $(u,v)$ as in the
  list above (up to a common sign change). The only primitive solution that is obtained
  when evaluating the triples of Edwards forms at these values is the Catalan solution.
\end{proof}

Taking into account the fact that three pairs among the eight relevant equations~\eqref{eqn:deg10}
are equivalent, and so only five equations need to be considered,
this proves the first statement in Theorem~\ref{thm:main} (given that
we have already dealt with all other factorization patterns).

This raises the question how one could try to establish the assumption in the proposition
above. Note that equation~\eqref{eqn:deg10} really describes a curve that is given
by six equations of the form
\[ (\text{degree~10 in $u$, $v$}) = (\text{quintic in $w_1, \ldots, w_6$}) \,, \]
which are obtained by fixing an integral basis of~$\calO_{K_i}$, writing
$w$ as a linear combination of this basis with coefficients $w_1, \ldots, w_6$
and then comparing coefficients with respect to this basis on both sides.
This is a curve of \emph{very} large genus.

However, one could try to work with the curve over~$K_i$ (which by Riemann-Hurwitz
has genus~$16$) and use the extra condition that $u, v \in \Z$ at a suitable stage.
But this still seems to be infeasible.

An alternative is to make a further quadratic extension from $K_i$ to~$L_i$ such that
the quadratic factor in the original factorization of~$h_i$ over~$K_i$ splits over~$L_i$.
Then we can use Remark~\ref{Rkg2} to obtain a genus~$2$ curve over~$L_i$ of the
form $Y^2 = X^5 + A$. This proves the second statement in Theorem~\ref{thm:main}.
But if we want to use standard methods to try and determine
the $L_i$-rational points on it, we first need to compute its $2$-Selmer group,
which requires information on the class and unit groups of the field~$L_i(\sqrt[5]{A})$
of degree~$60$ over~$\Q$. Unfortunately, this is quite a bit too large for current
technology to give a result in reasonable time, even assuming~GRH.

%=============================================================================

\begin{bibdiv}
\begin{biblist}

\bib{MAGMA}{article}{
   author={Bosma, Wieb},
   author={Cannon, John},
   author={Playoust, Catherine},
   title={The Magma algebra system. I. The user language},
   note={Computational algebra and number theory (London, 1993)},
   journal={J. Symbolic Comput.},
   volume={24},
   date={1997},
   number={3-4},
   pages={235--265},
   issn={0747-7171},
   review={\MR{1484478}},
   doi={10.1006/jsco.1996.0125},
}

\bib{DFV}{misc}{
   author={Dembélé, Lassina},
   author={Freitaas, Nuno},
   author={Voight, John},
   title={On Galois inertial types of elliptic curves over $\Q_\ell$},
   date={2024-04-08},
   note={\url{https://arxiv.org/abs/2203.07787v3}, to appear in Math.~Comp.},
}

\bib{Ed}{article}{
   author={Edwards, Johnny},
   title={A complete solution to $X^2+Y^3+Z^5=0$},
   journal={J. Reine Angew. Math.},
   volume={571},
   date={2004},
   pages={213--236},
   issn={0075-4102},
   review={\MR{2070150}},
   doi={10.1515/crll.2004.043},
}

\bib{FNS23n}{article}{
   author={Freitas, Nuno},
   author={Naskręcki, Bartosz},
   author={Stoll, Michael},
   title={The generalized Fermat equation with exponents $2$, $3$, $n$},
   journal={Compos. Math.},
   volume={156},
   date={2020},
   number={1},
   pages={77--113},
   issn={0010-437X},
   review={\MR{4036449}},
   doi={10.1112/s0010437x19007693},
}

\bib{FKsymplectic}{article}{
   author={Freitas, Nuno},
   author={Kraus, Alain},
   title={On the symplectic type of isomorphisms of the $p$-torsion of
   elliptic curves},
   journal={Mem. Amer. Math. Soc.},
   volume={277},
   date={2022},
   number={1361},
   pages={v+105},
   issn={0065-9266},
   isbn={978-1-4704-5210-0; 978-1-4704-7093-7},
   review={\MR{4403927}},
   doi={10.1090/memo/1361},
}

\bib{PSS}{article}{
   author={Poonen, Bjorn},
   author={Schaefer, Edward F.},
   author={Stoll, Michael},
   title={Twists of $X(7)$ and primitive solutions to $x^2+y^3=z^7$},
   journal={Duke Math. J.},
   volume={137},
   date={2007},
   number={1},
   pages={103--158},
   issn={0012-7094},
   review={\MR{2309145}},
   doi={10.1215/S0012-7094-07-13714-1},
}

\bib{SiksekS}{article}{
   author={Siksek, Samir},
   author={Stoll, Michael},
   title={Partial descent on hyperelliptic curves and the generalized Fermat
   equation $x^3+y^4+z^5=0$},
   journal={Bull. Lond. Math. Soc.},
   volume={44},
   date={2012},
   number={1},
   pages={151--166},
   issn={0024-6093},
   review={\MR{2881333}},
   doi={10.1112/blms/bdr086},
}

\bib{code}{misc}{
   author={Stoll, Michael},
   title={Magma code verifying the computational claims in this paper},
   date={2025},
   note={\url{https://github.com/MichaelStollBayreuth/Verification}},
   url={https://github.com/MichaelStollBayreuth/Verification},
}

\bib{StollSelmer}{article}{
   author={Stoll, Michael},
   title={Chabauty without the Mordell-Weil group},
   conference={
      title={Algorithmic and experimental methods in algebra, geometry, and
      number theory},
   },
   book={
      publisher={Springer, Cham},
   },
   isbn={978-3-319-70565-1},
   isbn={978-3-319-70566-8},
   date={2017},
   pages={623--663},
   review={\MR{3792746}},
}

\end{biblist}
\end{bibdiv}

\end{document}